\documentclass[a4paper, 12pt]{amsart}
\usepackage[top=35truemm,bottom=35truemm,left=35truemm,right=35truemm]{geometry}

\usepackage{amscd, amsmath, amssymb, amsthm, graphics}
\usepackage{color} 

\numberwithin{equation}{section}

\theoremstyle{plain}
\newtheorem{thm}{Theorem}[section]
\newtheorem{prop}[thm]{Proposition}
\newtheorem{cor}[thm]{Corollary}
\newtheorem{lem}[thm]{Lemma}

\theoremstyle{definition}
\newtheorem*{defi}{Definition}
\newtheorem{example}[thm]{Example}
\newtheorem{rem}[thm]{Remark}

\newtheorem*{problem}{Problem}

\begin{document}

\title[Austere and arid properties for PF submanifolds]{Austere and arid properties for \\ PF submanifolds in Hilbert spaces}
\author[M. Morimoto]{Masahiro Morimoto}


\address[M. Morimoto]{Department of Mathematics, Graduate School of Science, Osaka City University, 3-3-138 Sugimoto, Sumyoshi-ku, Osaka 558-8585 JAPAN.}
\email{mmasahiro0408@gmail.com}

\keywords{minimal submanifolds, austere submanifolds, arid submanifolds, PF submanifolds in Hilbert spaces}

\subjclass[2010]{53C40}

\thanks{The author was partly supported by the Grant-in-Aid for JSPS Research Fellow (No.18J14857) and by Osaka City University Advanced
Mathematical Institute (MEXT Joint Usage/Research Center on Mathematics and Theoretical Physics).}

\maketitle

\begin{abstract}
Austere submanifolds and arid  submanifolds constitute respectively two different classes of minimal submanifolds in finite dimensional Riemannian manifolds. In this paper we introduce these two notions into a class of proper Fredholm (PF)  submanifolds in Hilbert spaces, discuss their relation and show examples of infinite dimensional austere PF submanifolds and arid PF submanifolds in Hilbert spaces. We also mention a classification problem of minimal orbits in hyperpolar  PF actions on Hilbert spaces.
\end{abstract}

\section*{Introduction}
An \emph{austere submanifold} is a minimal submanifold of a Riemannian manifold which has a local symmetry. More precisely a submanifold $M$ immersed in a finite dimensional Riemannian manifold $\bar{M}$ is called \emph{austere} if for each normal vector $\xi$ the set of eigenvalues with multiplicities of the shape operator $A_\xi$ is invariant under the multiplication by $(-1)$. This notion was originally introduced by Harvey and Lawson \cite{HL82} in the study of calibrated geometry. Except for the case of surfaces the austere condition is much stronger than the minimal one. It is an interesting problem to classify austere submanifolds under suitable conditions (e.g.\ \cite{Bry91}, \cite{DF01}, \cite{II10}, \cite{IST09}, \cite{KM19}).

In \cite{IST09} Ikawa Sakai and Tasaki  introduced a certain kind of austere submanifold which has a global symmetry, which they call a \emph{weakly reflective submanifold}. A 
submanifold $M$ immersed in a finite dimensional Riemannian manifold $\bar{M}$ is called \emph{weakly reflective} if for each normal vector $\xi$ at each $p \in M$ there exists an isometry $\nu_\xi$ of $\bar{M}$ which satisfies
\begin{equation*}
\nu_\xi(p)= p, \quad 
d \nu_\xi(\xi)= - \xi, \quad
\nu_\xi(M) = M.
\end{equation*}
Here we call $\nu_\xi$ a reflection with respect to $\xi$. A reflective submanifold (\cite{Leu73}),  defined as a connected component of the fixed point set of an involutive isometry on $\bar{M}$, is an example of a weakly reflective submanifold. Another  example is a singular orbit of a cohomogeneity one action, which was essentially shown to be weakly reflective by Podest\`a \cite{Pod97}. It is an interesting problem to study submanifold geometry of orbits under  isometric actions of Lie groups and to determine their weakly reflective orbits (e.g.\ \cite{IST09}, \cite{Ohno16}, \cite{Eno18}).

Recently Taketomi \cite{Tak18} introduced a generalized concept of weakly reflective submanifolds, namely \emph{arid submanifolds}. A submanifold $M$ immersed in a finite dimensional Riemannian manifold $\bar{M}$ is called \emph{arid} if for each nonzero normal vector $\xi$ at each $p \in M$ there exists an isometry $\varphi_\xi$ of $\bar{M}$ which satisfies 
\begin{equation*}
\varphi_\xi(p) = p
, \quad
d \varphi_\xi(\xi) \neq \xi
, \quad
\varphi_\xi(M) = M.
\end{equation*}
Here we call $\varphi_\xi$ an isometry with respect to $\xi$. From this definition we have:
\begin{equation*}
\begin{array}{ccccccc}
&&&&\text{ austere }&& \vspace{-3mm}
\\
&&&\rotatebox{30}{$\hspace{-1.8mm}\Rightarrow$}&&\rotatebox{-30}{$\hspace{-1.8mm}\Rightarrow$}&\vspace{-2.5mm}
\\
\text{reflective}&\Rightarrow&\text{weakly reflective }&&&&\text{minimal}. \vspace{-2.5mm}
\\
&&&\rotatebox{-30}{$\hspace{-1.8mm}\Rightarrow$}&&\rotatebox{30}{$\hspace{-1.8mm}\Rightarrow$}& \vspace{-3.5mm}
\\
&&&&\text{ arid }&&
\end{array}
\end{equation*}
In \cite{Tak18} he gave an example of an arid submanifold which is \emph{not} an austere submanifold (therefore \emph{not} a weakly reflective submanifold). Also he showed that any isolated orbit of a proper isometric action is an arid submanifold. It is a problem for a given proper isometric action to determine their arid orbits which are not isolated. 

It is also interesting to study these submanifolds and problems in the infinite dimensional case.

A fundamental class of infinite dimensional submanifolds is given by \emph{proper Fredholm} (PF) submanifolds in Hilbert spaces (Terng \cite{Ter89}), where the shape operators are compact operators and the distance functions satisfy the Palais-Smale condition (\cite{Pal63}, \cite{Sma64}). 
It is known that many examples of PF submanifolds are obtained through a certain Riemannian submersion $\Phi_K: V_\mathfrak{g} \rightarrow G/K$ which is called the \emph{parallel transport map}  (\cite{KT93}, \cite{TT95}). Here $G/K$ is a compact normal homogeneous space and $V_\mathfrak{g} := L^2([0,1], \mathfrak{g})$ the Hilbert space of all $L^2$-paths with values in the Lie algebra $\mathfrak{g}$ of $G$ (cf.\ Section \ref{Sec2}). It was proved (\cite{TT95}) that if $N$ is a closed submanifold of $G/K$ then its inverse image $\Phi_K^{-1}(N)$ is a PF submanfold of $V_\mathfrak{g}$. A fundamental problem is to study the geometrical relation between $N$ and $\Phi_K^{-1}(N)$. For example, it was shown (\cite{KT93}, \cite{Koi02}, \cite{HLO06}) that if $N$ is minimal then $\Phi_K^{-1}(N)$ is also minimal (in some sense).

Recently the author \cite{M19}  introduced the concept of weakly reflective submanifolds into a class of PF submanifolds in Hilbert spaces and studied the geometrical relation between submanifolds $N$ and $\Phi_K^{-1}(N)$. To simplify explanation here we suppose that $G/K$ is an irreducible Riemannian symmetric space of compact type. The author showed (\cite[Theorem 8]{M19}): 
\begin{enumerate}
\item[] \emph{If $N$ is weakly reflective, then $\Phi_K^{-1}(N)$ is  also weakly reflective.}
\end{enumerate}
It is noted that even if $N$ is reflective, $\Phi_K^{-1}(N)$ can \emph{not} be reflective (\cite[Remark 5]{M19}). Then there are two questions:
\begin{enumerate}
\item[] Question A: \quad  \emph{If $N$ is austere, is $\Phi_K^{-1}(N)$ austere ?}
\item[] Question B:  \emph{\quad If $N$ is arid, is $\Phi_K^{-1}(N)$ arid ?} 
\end{enumerate}

The purpose of this paper is to give  answers to Questions A and B as far as possible. Main results are Theorem \ref{sphere2} and Theorem \ref{thm4}.  Here Theorem \ref{sphere2} gives an affirmative answer to Question A under the assumption that  $G/K$ is a sphere, while in other cases it remains unsolved because there is no simple relation between the principal curvatures of $N$ and $\Phi_K^{-1}(N)$. 
On the other hand Theorem \ref{thm4} is just an affirmative answer to Question B. Note that Theorem \ref{thm4} is proved similarly to the weakly reflective case \cite{M19}. As an application we show examples of arid PF submanifolds which are \emph{not} austere (therefore \emph{not} weakly reflective) PF submanifolds (Examples \ref{example3}, \ref{example1}, \ref{example2}). Also we mention a classification problem of minimal orbits in the \emph{$P(G,H)$-action}, which is an isometric PF action of a Hilbert Lie group $P(G,H)$ on the Hilbert space $V_\mathfrak{g}$.

This paper is organized as follows. In Section 1 we define the arid property  for PF submanifolds in Hilbert spaces. In Section 2 we prepare the setting of $P(G,H)$-actions and the parallel transport map. In Section 3 we calculate  the principal curvatures of PF submanifolds obtained through the parallel transport map. In Section 4, to answer Question A, we study the austere property of so obtained PF submanifolds. In section 5,  to answer  Question B, we study the arid property of so obtained PF submanifolds.  In Section 6 we mention the classification problem of minimal orbits in hyperpolar $P(G,H)$-actions.

\section{PF submanifolds with symmetries}
Let $V$ be a separable Hilbert space over $\mathbb{R}$ and $M$ a Hilbert manifold immersed in $V$. The end point map $Y: T^\perp M \rightarrow V$ is defined by $Y(\xi) := p + \xi$ for $\xi \in T^\perp_p M$. $M$ is called \emph{proper Fredholm} (PF) (\cite{Ter89}) if it has finite codimension and the restriction of $Y$ to a normal disc bundle of any finite radius is proper and Fredholm. It is known (\cite[Proposition 2.7]{Ter89}) that for each $p \in M$ and each $\xi \in T^\perp _p M$ the shape operator $A_\xi$ is a self-adjoint compact operator on $T_pM$ which is not of trace class in general. 

At present there are three kinds of definitions for `minimal' PF submanifolds, namely $\zeta$-minimal (\cite{KT93}), $r$-minimal (\cite{HLO06}) and $f$-minimal (\cite{Koi02}). Here we recall the first two which are used in this paper. Let $M$ be a PF submanifold of $V$ and $\xi \in T^\perp M$. We denote by 
\begin{equation*}
\mu_1 \leq \mu_2 \leq \cdots 
<0<
\cdots \leq \lambda_2 \leq \lambda_1.
\end{equation*}
the eigenvalues repeated with multiplicities of the shape operator $A_\xi$. 
 
We say that $A_\xi$ is \emph{$\zeta$-regularizable} (\cite{KT93}) if 
$
\sum_{k} \lambda_k^s + \sum_{k} |\mu_k|^s < \infty
$ for all $s >1$ and  
\begin{equation*}
\operatorname{tr}_{\zeta} A_\xi := \lim_{s \searrow1}\left(\sum_{k} \lambda_k^s - \sum_{k} |\mu_k|^s\right)
\end{equation*} 
exists. Then we call $\operatorname{tr}_\zeta A_\xi$ the \emph{$\zeta$-regularized mean curvature}  in the direction of $\xi$.  $M$ is called \emph{$\zeta$-regularizable} if $A_\xi$ is $\zeta$-regularizable for all $\xi \in T^\perp M$. If $M$ is $\zeta$-regularizable and $\operatorname{tr}_{\zeta} A_\xi$ vanishes for all $\xi \in T^\perp M$,  we say that $M$ is \emph{$\zeta$-minimal}. 

We say that $A_\xi$ is \emph{regularizable} (\cite{HLO06}) if $\operatorname{tr} A_\xi^2 < \infty$ and 
\begin{equation*}
\operatorname{tr}_{r} A_\xi := \sum_{k=1}^\infty (\lambda_k + \mu_k)
\end{equation*} 
converges, where we regard $\lambda_k$ or $\mu_k$ as zero if there are less than $k$ positive or negative eigenvalues, respectively. Then we call 
$
\operatorname{tr}_{r} A_\xi
$
the \emph{regularized mean curvature} in the direction of $\xi$.  $M$ is called  \emph{regularizable} if $A_\xi$ is  regularizable for all $\xi \in T^\perp M$. If $M$ is regularizable and $\operatorname{tr}_{r} A_\xi$ vanishes for all $\xi \in T^\perp M$,  we say that $M$ is \emph{r-minimal}.

In \cite{M19} the concepts of reflective submanifolds, weakly reflective submanifolds and austere submanifolds were introduced into a class of PF submanifolds in Hilbert spaces. Similarly we can define arid PF submanifolds:
\begin{defi}
Let $M$ be a PF submanifold of $V$. $M$ is called \emph{arid} if for each $p \in M$ and each $\xi \in T^\perp_p M \backslash \{0\}$ there exists an isometry $\varphi_\xi$ of $V$ which satisfies 
\begin{equation*}
\varphi_\xi(p) = p
, \quad 
d \varphi_\xi(\xi) \neq \xi
, \quad 
\varphi_\xi(M) = M.
\end{equation*}
\end{defi}
We have the following relation: 
\begin{equation*}
\begin{array}{ccccccc}
&&&&\text{austere PF}&& \vspace{-3mm}
\\
&&&\rotatebox{30}{$\hspace{-1.8mm}\Rightarrow$}&&&\vspace{-2.5mm}
\\
\text{reflective PF}&\Rightarrow&\text{weakly reflective PF}&&&&\text{} \vspace{-2.5mm}
\\
&&&\rotatebox{-30}{$\hspace{-1.8mm}\Rightarrow$}&&& \vspace{-3.5mm}
\\
&&&&\text{\ arid PF.}&&
\end{array}
\end{equation*}
The $\zeta$-minimality and $r$-minimality of austere PF submanifolds were discussed in \cite[Section 1]{M19}. We do not know in general whether arid PF submanifolds are $\zeta$-minimal or $r$-minimal because it is not clear that the mean curvature vector of a PF submanifold is well-defined or not. However it will not interfere our purpose because we will give attention to PF submanifolds obtained through the parallel transport map (\cite{KT93}, \cite{TT95}), where the mean curvature vector is well-defined (\cite[Theorem 4.12]{KT93}, \cite[Lemma 5.2]{HLO06}) and thus the arid property implies both $\zeta$-minimality and $r$-minimality.

\section{$P(G,H)$-actions and the parallel transport map} \label{Sec2}

In this section we prepare the setting of $P(G, H)$-actions and the parallel transport map. The related and detailed facts can be found in \cite[Section 2]{M19} and references therein. 

Let $G$ be a connected compact Lie group with Lie algebra $\mathfrak{g}$. Choose an $\operatorname{Ad}(G)$-invariant inner product of $\mathfrak{g}$ and equip the corresponding bi-invariant Riemannian metric with $G$. Denote by $\mathcal{G} := H^1([0,1], G)$ the Hilbert Lie group of all Sobolev $H^1$-paths in $G$ parametrized on $[0,1]$ and by $V_\mathfrak{g} := H^0([0,1], \mathfrak{g})$ the Hilbert space of all Sobolev $H^0$-paths in $\mathfrak{g}$ parametrized on $[0,1]$. For each $a \in G$ (resp.\ $x \in \mathfrak{g}$) we denote by $\hat{a} \in \mathcal{G}$ (resp.\ $\hat{x} \in V_\mathfrak{g}$)  the constant path which values at $a$ (resp.\ $x$). Then $\mathcal{G}$ acts on $V_\mathfrak{g}$  via the gauge transformations:
\begin{equation}\label{gaugetransf}
g* u := gug^{-1} - g' g^{-1}
, \quad
g \in \mathcal{G}, \ u \in V_\mathfrak{g},
\end{equation}
where $g'$ denotes the weak derivative of $g$ with respect to the parameter on $[0,1]$. We know that this action is isometric, transitive, proper and Fredholm (\cite[Theorem 5.8.1]{PT88}).

Let $H$ be a closed subgroup of $G \times G$ with Lie algebra $\mathfrak{h}$. Define a Lie subgroup $P(G,H)$ of $\mathcal{G}$ by 
\begin{equation*}
P(G, H) := \{g \in \mathcal{G} \mid (g(0), g(1)) \in H\}
\end{equation*}
with Lie algebra
\begin{equation*}
\operatorname{Lie}P(G,H)=
\{Z \in H^1([0,1], \mathfrak{g}) \mid  (Z(0), Z(1)) \in \mathfrak{h}\}.
\end{equation*}
The induced action of $P(G,H)$ on $V_\mathfrak{g}$ is called the \emph{$P(G,H)$-action}. We know that $P(G,H)$-action is isometric, proper and Fredholm (\cite[p.\ 132]{Ter95}). Thus each orbit of the $P(G,H)$-action is a PF submanifold of $V_\mathfrak{g}$ (\cite[Theorem 7.1.6]{PT88}). Also we know (\cite[Corollary 4.2]{TT95}) that both $P(G, \{e\} \times G)$-action on $V_\mathfrak{g}$ and $P(G, G \times \{e\})$-action on $V_\mathfrak{g}$ are simply transitive.


The \emph{parallel transport map} (\cite{KT93}, \cite{Ter95}) $\Phi: V_\mathfrak{g} \rightarrow G$ is a Riemannian submersion defined by 
\begin{equation*}
\Phi(u) :=E_u(1), \quad u \in V_\mathfrak{g},
\end{equation*}
where $E_u \in \mathcal{G}$ is the unique solution to the linear ordinary differential equation
\begin{equation*}
\left\{
\begin{array}{l}
E_u^{-1} E_u = u, 
\\
E_u(0) = e.
\end{array}
\right.
\end{equation*}
Note that $G \times G$ acts on $G$ isometrically by 
\begin{equation} \label{Haction}
(b_1, b_2) \cdot a := b_1 a b_2^{-1}
,\quad
a, b_1, b_2 \in G.
\end{equation}
We know (\cite[Proposition 1.1]{Ter95}) that for $g \in \mathcal{G}$ and $u \in V_\mathfrak{g}$
\begin{equation} \label{equiv4}
\Phi(g * u) = (g(0), g(1)) \cdot \Phi(u)
\end{equation}
and that for any closed subgroup $H$ of $G \times G$, 
\begin{equation}\label{iimage}
P(G, H) * u = \Phi^{-1}(H \cdot \Phi(u)).
\end{equation}
More generally, if $N$ is a closed submanifold of $G$ then the inverse image $\Phi^{-1}(N)$ is a PF submanifold of $V_\mathfrak{g}$ (\cite[Lemma 5.1]{TT95}). 

The differential $(d \Phi)_{\hat{0}} : T_{\hat{0}} V_\mathfrak{g} \rightarrow T_e G \cong \mathfrak{g}$ of $\Phi$ at $\hat{0} \in V_\mathfrak{g}$ (cf.\ \cite[p.\ 685]{TT95}) is given by 
\begin{equation*}
(d \Phi)_{\hat{0}} (X) = \int_0^1 X(t) dt
, \quad
X \in T_{\hat{0}} V_\mathfrak{g} \cong V_\mathfrak{g}.
\end{equation*}
 Using this we obtain the orthogonal direct sum decomposition 
\begin{equation*}
T_{\hat{0}} V_\mathfrak{g} 
= 
\hat{\mathfrak{g}} 
\oplus 
\operatorname{Ker} (d\Phi)_{\hat{0}}, 
\quad
X = \left(\int_0^1 X(t) dt \right) \oplus \left( X - \int_0^1 X(t) dt\right).
\end{equation*}

Let $K$ be a closed subgroup of $G$ with Lie algebra $\mathfrak{k}$. Denote by $\mathfrak{g}= \mathfrak{k} + \mathfrak{m}$ the orthogonal direct sum  decomposition. Restricting the $\operatorname{Ad}(G)$-invariant inner product of $\mathfrak{g}$ to $\mathfrak{m}$ we  define the induced $G$-invariant Riemannian metric on the  homogeneous space $G/K$. Such a  metric is called a normal homogeneous metric and $G/K$ a compact normal homogeneous space. We write $\pi : G \rightarrow G/K$ for the natural projection, which is a Riemannian submersion with totally geodesic fiber. The \emph{parallel transport map $\Phi_K$ over $G/K$} is a Riemannian submersion defined by 
\begin{equation} \label{ptm2} 
\Phi_K := \pi \circ \Phi : V_\mathfrak{g} \rightarrow G \rightarrow G/K.
\end{equation}

Note that for each $g \in P(G, G \times \{e\})$ and $a \in G$ the diagrams
\begin{align} \label{commute1}
\text{(i)}\quad 
\begin{CD}
V_\mathfrak{g}@>g* >> V_\mathfrak{g}
\\
@V\Phi VV @V\Phi VV
\\
G @> (g(0),\, e) >> G
\end{CD}
\quad \quad \quad \quad \quad 
\text{(ii)} \quad 
\begin{CD}
G @>l_a>> G
\\
@V\pi VV @V\pi VV
\\
G/K @>L_a>> G/K
\end{CD}
\end{align}
commute, where $l_a$ denotes the left translation by $a$ and $L_a$ the  isometry on $G/K$ defined by $L_a(bK) := (ab)K$. Thus setting $a := g(0)$ we obtain a commutative diagram
\begin{equation} \label{commute3}
\begin{CD}
V_\mathfrak{g} @>g*>> V_\mathfrak{g}
\\
@V \Phi_K VV  @V\Phi_{K} VV
\\
\ G/K\  @> L_a >> \ G/K.
\end{CD}
\end{equation}



Let $G$, $K$ be as above. Denote by $F_K := \Phi_K^{-1}(eK) = \Phi^{-1}(K)$ the fiber of $\Phi_K$ at $eK \in G/K$ and by $F := \Phi^{-1}(e)$ the fiber of $\Phi$ at $e \in G$. Since 
\begin{equation*}
F_K = \Phi^{-1}((\{e\} \times K)\cdot e) = P(G, \{e\} \times K) * \hat{0}
\end{equation*}
we can write (cf.\ \cite[equation (9)]{M19})
\begin{equation} \label{fiber1}
T_{\hat{0}} F_K = \{-Z' \mid Z \in H^1([0,1], \mathfrak{g}), \ Z(0)=0, \ Z(1) \in \mathfrak{k}\}.
\end{equation}
Similarly we have
\begin{equation}\label{fiber2}
T_{\hat{0}} F = \{-Q' \mid Q \in H^1([0,1], \mathfrak{g}), \ Q(0)=Q(1) =0 \}.
\end{equation}

Suppose that a closed submanifold $N$ of $G/K$ through $eK \in G/K$ is given. Since $\Phi$ and $\Phi_K$ are Riemannian  submersions we have the orthogonal direct sum decompositions
\begin{equation*}
T_{\hat{0}} \Phi_K^{-1}(N)
\cong
T_{\hat{0}} F_K \oplus T_{eK} N
\cong 
T_{\hat{0}} F \oplus \mathfrak{k} \oplus T_{eK} N.
\end{equation*}
From now on $A^{\Phi_K^{-1}(N)}$, $A^{\pi^{-1}(N)}$ and $A^N$ denote the shape operators of $\Phi_K^{-1}(N)$, $\pi^{-1}(N)$ and $N$, respectively. We fix a normal vector $\xi$ of $N$ at $eK$. Then its horizontal lift at $\hat{0} \in \Phi_K^{-1}(N)$ is given by the constant path $\hat{\xi}$. 
\begin{prop}
Suppose $[\mathfrak{m}, \mathfrak{m}] \subset \mathfrak{k}$. For $-Q' \in T_{\hat{0}} F$, $x \in \mathfrak{k}$, $y \in T_{eK} N$
\begin{align}
\label{so1}
&A^{\Phi_K^{-1}(N)}_{\hat{\xi}}(-Q')
=
[\hat{\xi}, Q] - \left[\xi, \int_0^1Q(t) dt\right]^\perp,
\\
\label{so2}
&A^{\Phi_K^{-1}(N)}_{\hat{\xi}}(x)
=
\frac{1}{2}[\xi, x]^\perp - t[\xi, x],
\\
\label{so3}
&A^{\Phi_K^{-1}(N)}_{\hat{\xi}}(y)
=
A^N_\xi(y) + 
(1- t )[\xi, y],
\end{align}
where $\perp$ denote the projection from $\mathfrak{g}$ onto $T^\perp_{eK} N (\subset \mathfrak{m})$.
\end{prop}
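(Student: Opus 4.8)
The plan is to compute the shape operator directly from the Weingarten formula in the flat ambient space. Since $V_\mathfrak{g}$ is a Hilbert space with vanishing curvature, for any normal field $\hat\Xi$ of $\Phi_K^{-1}(N)$ extending $\hat\xi$ one has $A^{\Phi_K^{-1}(N)}_{\hat\xi}(X)=-(D_X\hat\Xi)^{\top}$, where $D$ is the canonical flat derivative of $V_\mathfrak g$ and $(\cdot)^{\top}$ is the orthogonal projection onto $T_{\hat0}\Phi_K^{-1}(N)$. First I would record the relevant tangent and normal spaces: by the decomposition preceding the statement together with \eqref{fiber1} and \eqref{fiber2}, the summand $T_{\hat0}F$, the constant paths $\hat{\mathfrak k}$, and the $\Phi_K$-horizontal lift $\widehat{T_{eK}N}$ realize $T_{\hat0}\Phi_K^{-1}(N)$, while the normal space is the horizontal lift $\widehat{T^\perp_{eK}N}$ of $T^\perp_{eK}N$, i.e.\ the constant paths with values in $T^\perp_{eK}N$; in particular $\hat\xi$ is normal. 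The three formulas correspond to the three summands, so I would treat $X=-Q'$, $X=\hat x$ with $x\in\mathfrak k$, and $X=\hat y$ with $y\in T_{eK}N$ separately, in each case producing an explicit curve $\gamma(s)$ in $\Phi_K^{-1}(N)$ with $\gamma(0)=\hat0$, $\dot\gamma(0)=X$, a normal field $\hat\Xi(s)$ along it, and then differentiating at $s=0$.

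For the two fiber directions the curves come from the gauge action \eqref{gaugetransf}. For $-Q'\in T_{\hat0}F$ I would take $\gamma(s)=\exp(sQ)*\hat0$ with $Q(0)=Q(1)=0$; by \eqref{equiv4} the gauge map $\psi_s:=\exp(sQ)*(\cdot)$ leaves $\Phi_K$ invariant, hence is an isometry of $V_\mathfrak g$ preserving $\Phi_K^{-1}(N)$, so $\hat\Xi(s):=d\psi_s(\hat\xi)$ is a normal field. Since the linear part of $\psi_s$ is $v\mapsto\operatorname{Ad}(\exp(sQ))v$, differentiating gives $D_{-Q'}\hat\Xi=-[\hat\xi,Q]$, and $-(D_{-Q'}\hat\Xi)^{\top}=[\hat\xi,Q]^{\top}$, whose normal part is the $L^2$-average $[\xi,\int_0^1 Q]$ projected onto $T^\perp_{eK}N$; this yields \eqref{so1}. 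For $\hat x$ with $x\in\mathfrak k$ I would use $\gamma(s)=s\hat x$, which equals $g(s)*\hat0$ for the path $g(s)(t)=\exp(-stx)$ lying in $P(G,\{e\}\times K)$ precisely because $x\in\mathfrak k$; by \eqref{equiv4} this again preserves $\Phi_K^{-1}(N)$, and the same computation gives $D_{\hat x}\hat\Xi=t[\xi,x]$, whose tangential projection is $t[\xi,x]-\tfrac12[\xi,x]^{\perp}$, producing \eqref{so2}.

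The horizontal direction \eqref{so3} is the delicate one, and I expect it to be the main obstacle. Here no straight line or fiber-preserving gauge curve is available, since the geodesic $\exp(sy)K$ need not stay in $N$. Instead I would choose a curve $c(s)$ in $N$ with $\dot c(0)=y$, take its $\Phi_K$-horizontal lift $\tilde c(s)$, so that $\dot{\tilde c}(0)=\hat y$, choose a normal field $\Xi(s)$ of $N$ along $c$ with $\Xi(0)=\xi$, and lift it $\Phi_K$-horizontally to a normal field $\hat\Xi(s)$ of $\Phi_K^{-1}(N)$. Differentiating $\hat\Xi(s)$ at $s=0$ splits into two contributions: the variation of $\Xi$ in the base, which by the Riemannian submersion property reproduces $A^N_\xi(y)$ after tangential projection, and the variation of the horizontal frame, which is governed by $\operatorname{Ad}(E_{\tilde c(s)}(t))$. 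Because the normal datum is anchored at the endpoint $E_{\tilde c(s)}(1)=\Phi(\tilde c(s))$, this frame twist involves transport over $[t,1]$ and produces the factor $(1-t)$, giving the correction $(1-t)[\xi,y]$; here the assumption $[\mathfrak m,\mathfrak m]\subset\mathfrak k$ guarantees $[\xi,y]\in\mathfrak k$, so the correction already lies in $T_{\hat0}F_K$ and appears with no normal part to subtract. It may be cleanest to organize this through $\Phi_K^{-1}(N)=\Phi^{-1}(\pi^{-1}(N))$, computing first the shape operator $A^{\pi^{-1}(N)}$ of $\pi^{-1}(N)$ in $G$ (using that $\pi$ has totally geodesic fibers and $[\mathfrak m,\mathfrak m]\subset\mathfrak k$) and then applying $\Phi$; this is presumably why $A^{\pi^{-1}(N)}$ is introduced in the statement.

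The hard part throughout is the bookkeeping of the $t$-dependence: the $t$-linear bracket terms all originate from differentiating the $\operatorname{Ad}(E_u(t))$-twist of horizontal objects, with the coefficient ($t$ versus $1-t$) determined by whether the relevant datum is carried forward from $t=0$, as in the gauge cases, or backward from the endpoint $t=1$, as in the normal-field lift. Once each $D_X\hat\Xi$ is identified as an explicit path, the remaining work is the elementary identification of its normal component as the $L^2$-average projected onto $T^\perp_{eK}N$, which converts $-(D_X\hat\Xi)^{\top}$ into the stated right-hand sides.
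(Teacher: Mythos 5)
Your proposal is correct, but it takes a genuinely different route from the paper. The paper's proof is essentially a citation argument: \eqref{so1} is quoted from \cite[Corollary 1 (ii)]{M19}, and \eqref{so2}, \eqref{so3} follow by combining the relation $A^{\Phi_K^{-1}(N)}_{\hat{\xi}}(v) = A^{\pi^{-1}(N)}_\xi(v) - (t-\tfrac12)[\xi,v]$ of \cite[Corollary 1 (i)]{M19} with the formula \eqref{so7} of \cite[Remark 1 (ii)]{M19}, then simplifying under $[\mathfrak{m},\mathfrak{m}]\subset\mathfrak{k}$, which forces $[\xi,y]_{\mathfrak{k}}=[\xi,y]$ and merges the $\top$ and $\perp$ terms. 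You instead rederive everything from first principles via the flat Weingarten formula $A_{\hat{\xi}}(X)=-(D_X\hat{\Xi})^{\top}$, and your treatment of the two non-horizontal directions is complete and correct: the curves $\exp(sQ)*\hat{0}$ (with $Q(0)=Q(1)=0$) and $s\hat{x}=g(s)*\hat{0}$ (with $g(s)(t)=\exp(-stx)$, which lies in $P(G,\{e\}\times K)$) preserve $\Phi_K^{-1}(N)$ by \eqref{equiv4}, the transported normal fields $\operatorname{Ad}(\cdot)\hat{\xi}$ have exactly the derivatives you state, and subtracting the normal component, namely the constant path at $\bigl(\int_0^1\cdot\,dt\bigr)^{\perp}$, yields \eqref{so1} and \eqref{so2}. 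What your route buys is self-containedness and transparency about where each $t$-coefficient originates; in effect you re-prove the cited lemmas of \cite{M19} in this special case, whereas the paper's route buys brevity and rests on formulas valid for general normal homogeneous $G/K$, the symmetric hypothesis entering only in the final simplification. The one thin spot is \eqref{so3}, which you only sketch: to complete it one writes the lifted normal field as $\hat{\Xi}(s)(t)=\operatorname{Ad}(E_{\tilde{c}(s)}(t))^{-1}\operatorname{Ad}(a(s))\,m(s)$ with $\Xi(s)=dL_{a(s)}m(s)$, uses $\tfrac{d}{ds}E_{\tilde{c}(s)}(t)\big|_{s=0}=ty$ to get the frame term $t[\xi,y]$ and the endpoint term $-[\xi,y]$ (combining, after the Weingarten sign, to your $(1-t)[\xi,y]$), and, crucially, identifies $\dot{m}(0)$ with $-A^N_\xi(y)$ modulo $T^{\perp}_{eK}N$; this last step uses that the Levi-Civita connection of $G/K$ coincides with the canonical connection, which holds precisely because $[\mathfrak{m},\mathfrak{m}]\subset\mathfrak{k}$, so the hypothesis enters twice, not only in making $[\xi,y]\in\mathfrak{k}$ tangent as you note. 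Your fallback suggestion of computing $A^{\pi^{-1}(N)}$ first and then passing through $\Phi$ is exactly the decomposition the paper (via \cite{M19}) uses.
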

\begin{proof}
The formula (\ref{so1}) follows from  \cite[Corollary 1 (ii)]{M19}. Also from  \cite[Corollary 1 (i)]{M19} we have the following formula: for $v \in T_{e} \pi^{-1}(N)$
\begin{equation*}
A^{\Phi_K^{-1}(N)}_{\hat{\xi}}(v)
=
A^{\pi^{-1}(N)}_\xi(v) - \left(t - \frac{1}{2}\right)[\xi, v].
\end{equation*}
Further by \cite[Remark 1 (ii)]{M19}, for $v = x \oplus y \in  T_{e} \pi^{-1}(N) = \mathfrak{k} \oplus T_{eK} N$ 
\begin{equation}\label{so7}
A^{\pi^{-1}(N)}_\xi(v)
=
A^N_\xi(y) + \frac{1}{2}[\xi, y]_{\mathfrak{k}} - \frac{1}{2}[\xi, x]^\top,
\end{equation}
where the subscript $\mathfrak{k}$ and $\top$ denote the projections onto $\mathfrak{k}$ and $T_e N$ respectively.   Since we are supposing $[\mathfrak{m}, \mathfrak{m}] \subset \mathfrak{k}$ these formulas imply (\ref{so2}) and (\ref{so3}).
\end{proof}
\begin{cor} For $-Z' \in T_{\hat{0}} F_K$
\begin{equation}\label{so4}
A^{\Phi_K^{-1}(N)}_{\hat{\xi}}
(-Z')
=
[\hat{\xi}, Z]- \left[\xi, \int_0^1 Z(t) dt\right]^\perp.
\end{equation}
\end{cor}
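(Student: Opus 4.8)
The plan is to reduce the statement to the Proposition by decomposing the tangent vector $-Z' \in T_{\hat 0}F_K$ into the two summands of the orthogonal splitting $T_{\hat 0}F_K \cong T_{\hat 0}F \oplus \mathfrak{k}$ and then invoking linearity of the shape operator together with formulas (\ref{so1}) and (\ref{so2}). Here the $\mathfrak{k}$-summand is identified with the constant paths $\hat{\mathfrak{k}} \subset \hat{\mathfrak{g}}$, consistent with the convention under which $x \in \mathfrak{k}$ in (\ref{so2}) denotes its horizontal lift $\hat{x}$. So I first split $-Z'$ along the orthogonal decomposition $T_{\hat 0}V_\mathfrak{g} = \hat{\mathfrak{g}} \oplus \operatorname{Ker}(d\Phi)_{\hat 0}$.

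First I would compute the horizontal component via the map $X \mapsto \int_0^1 X(t)\,dt$. For $X = -Z'$, and recalling $Z(0)=0$, this integral equals $-Z(1)$; since $Z(1) \in \mathfrak{k}$ by (\ref{fiber1}), the horizontal part is the constant path $\widehat{-Z(1)} \in \hat{\mathfrak{k}}$, i.e.\ it corresponds to $x := -Z(1) \in \mathfrak{k}$. The vertical remainder is then $-Z' + \widehat{Z(1)} = -Q'$, where I set $Q(t) := Z(t) - tZ(1)$; one checks $Q(0)=Q(1)=0$, so that $-Q' \in T_{\hat 0}F$ by (\ref{fiber2}). Thus $-Z' = (-Q') + \hat{x}$ with $-Q' \in T_{\hat 0}F$ and $x = -Z(1) \in \mathfrak{k}$.

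Next I would apply (\ref{so1}) to $-Q'$ and (\ref{so2}) to $x$ and add the results. Using $[\hat{\xi}, Q] = [\hat{\xi}, Z] - t[\xi, Z(1)]$ together with $\int_0^1 Q(t)\,dt = \int_0^1 Z(t)\,dt - \tfrac12 Z(1)$, formula (\ref{so1}) expands to $[\hat{\xi}, Z] - t[\xi, Z(1)] - [\xi, \int_0^1 Z(t)\,dt]^\perp + \tfrac12[\xi, Z(1)]^\perp$, while (\ref{so2}) with $x = -Z(1)$ yields $-\tfrac12[\xi, Z(1)]^\perp + t[\xi, Z(1)]$. The two $t[\xi, Z(1)]$ terms cancel, as do the two $\tfrac12[\xi, Z(1)]^\perp$ terms, leaving precisely the right-hand side of (\ref{so4}).

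No step here is a genuine obstacle; the argument is routine linear algebra once the splitting is fixed. The only point requiring care is the bookkeeping of the correction terms: one must keep the path-valued term $[\hat{\xi}, Z]$ and the scalar-weighted correction $t[\xi, Z(1)]$ distinct, and confirm that the primitive $Q$ chosen to realize the vertical part satisfies \emph{both} boundary conditions $Q(0)=Q(1)=0$, since an incorrect primitive would spoil the membership $-Q' \in T_{\hat 0}F$ and hence the applicability of (\ref{so1}).
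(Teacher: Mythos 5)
Your proposal is correct and follows essentially the same route as the paper: the paper's proof likewise sets $Q := Z - tZ(1)$ and $x := -Z(1)$ so that $-Z' = -Q' + \hat{x}$, then applies (\ref{so1}) and (\ref{so2}). You merely spell out the cancellation of the $t[\xi, Z(1)]$ and $\tfrac{1}{2}[\xi, Z(1)]^\perp$ terms, which the paper leaves implicit.
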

\begin{proof}
Set $Q := Z - t Z(1)$ and $x := -Z(1)$ so that $-Z' = -Q' + x$. By (\ref{so1}) and (\ref{so2}) the desired formula follows.
\end{proof}

\section{Principal curvatures}
In this section we calculate the principal curvatures of PF submanifolds obtained through the parallel transport map. For technical reasons, here we will restrict our attention to PF submanifolds obtained from \emph{curvature adapted} submanifolds in compact \emph{symmetric} spaces. Although such a subject has been studied by Koike \cite{Koi02} there are some inaccuracies in his eigenspace decomposition and so here we give the corrected formula with another elementary proof by using the formulas for shape operators prepared in the last section.

Recall that a submanifold $M$ immersed in a Riemannian manifold $\bar{M}$ is called \emph{curvature adapted} if for each $p \in M$ and each $\xi \in T^\perp_p M$ the Jacobi operator $R_\xi := \bar{R}(\cdot, \xi)\xi: T_p \bar{M} \rightarrow T_p \bar{M}$, where $\bar{R}$ denotes the curvature tensor of $\bar{M}$, satisfies 
\begin{equation*}
R_\xi(T_p M) \subset T_p M
\quad \text{and} \quad 
A^M_\xi \circ R_\xi|_{T_p M} = R_\xi|_{T_p M} \circ A^M_\xi,
\end{equation*}
where $A^M_\xi$ denotes the shape operator of $M$ in the direction of $\xi$. 

Let $G$ be a connected compact Lie group and $K$ a closed subgroup of $G$.  Suppose that $K$ is symmetric, that is, there exists an involutive automorphism $\theta$ of $G$ such that 
$
G^\theta_0 \subset K \subset G^\theta
$, 
where $G^\theta$ is the fixed point subgroup of $\theta$ and $G^\theta_0$ the identity component. Denote by $\mathfrak{g}$ and $\mathfrak{k}$ the Lie algebras of $G$ and $K$ respectively and by   
$
\mathfrak{g} = \mathfrak{k} + \mathfrak{m}
$
the direct sum decomposition into the $\pm 1$-eigenspaces of $d \theta$. We 
fix an inner product $\langle \cdot, \cdot \rangle$ of $\mathfrak{g}$ which is invariant under both $Ad(G)$ and $\theta$. Then the above direct sum  decomposition is orthogonal with respect to this inner product $\langle \cdot, \cdot \rangle$. We equip the corresponding bi-invariant Riemannian metric with $G$ and a normal homogeneous metric with $G/K$. Then $G/K$ is a compact symmetric space and the natural projection $\pi : G \rightarrow G/K$ is a Riemannian submersion with totally geodesic fiber. 
We denote by $\Phi_K: V_\mathfrak{g} \rightarrow G/K$ the parallel transport map.

Let $N$ be a curvature adapted  closed submanifold of $G/K$. Note that in order to calculate the principal curvatures of a PF submanifold $\Phi_K^{-1}(N)$ we can assume without loss of generality that $N$ contains $e K$ and moreover it suffices to consider normal vectors only at $\hat{0} \in \Phi_K^{-1}(N)$ because of the commutativity (\ref{commute3}).  Thus in the rest of this section we fix $\xi \in T^\perp_{eK} N$ and calculate the principal curvatures of $\Phi_K^{-1}(N)$ in the direction of $\hat{\xi} \in T^\perp_{\hat{0}} \Phi_K^{-1}(N)$. Note that in this case the Jacobi operator  is given by $R_\xi = - \operatorname{ad}(\xi)^2 : \mathfrak{m} \rightarrow \mathfrak{m}$.

Denote by $\{\sqrt{-1}\, \nu\}$ the set of all distinct eigenvalues of the skew adjoint operator $\operatorname{ad}(\xi): \mathfrak{g} \rightarrow \mathfrak{g}$. Consider the complexification $\operatorname{ad}(\xi): \mathfrak{g}^\mathbb{C} \rightarrow \mathfrak{g}^\mathbb{C}$ and the eigenspace decomposition
\begin{equation*}
\mathfrak{g}^\mathbb{C}
=
\mathfrak{g}_0^\mathbb{C}
+
\sum_{\nu \neq 0} \mathfrak{g}_\nu,
\end{equation*}
\begin{align*}
\mathfrak{g}_0
&:= 
\{x \in \mathfrak{g} \mid \operatorname{ad}(\xi)(x)=0 \},
\\
\mathfrak{g}_\nu
&:=
\{z \in \mathfrak{g}^\mathbb{C} \mid \operatorname{ad}(\xi)(z) = \sqrt{-1}\, \nu z\}.
\end{align*}
Since $\bar{\mathfrak{g}}_{\nu} = \mathfrak{g}_{- \nu}$ we can write
\begin{equation*}
\mathfrak{g}^\mathbb{C}
=
\mathfrak{g}_0^\mathbb{C}
+
\sum_{\nu >0 } (\mathfrak{g}_\nu + \mathfrak{g}_{- \nu})
\end{equation*}
and thus we obtain
\begin{equation*}
\mathfrak{g} = \mathfrak{g}_0 + \sum_{\nu>0} (\mathfrak{g}_\nu + \mathfrak{g}_{- \nu})_\mathbb{R},
\end{equation*}
\begin{equation*}
(\mathfrak{g}_\nu + \mathfrak{g}_{- \nu})_\mathbb{R}
=
\{x \in \mathfrak{g} \mid
\operatorname{ad}(\xi)^2(x)
= - \nu^2 x \},
\end{equation*}
which is nothing but the eigenspace decomposition with respect to $\operatorname{ad}(\xi)^2 : \mathfrak{g} \rightarrow \mathfrak{g}$. 
Since $\operatorname{ad}(\xi)^2$ commutes with involution $\theta$ we have the simultaneous eigenspace  decomposition
\begin{equation}\label{esd1}
\mathfrak{k}
= 
\mathfrak{k}_0
+
\sum_{\nu > 0} \mathfrak{k}_{\nu}
, \qquad
\mathfrak{m} = \mathfrak{m}_0
 + \sum_{\nu > 0}
\mathfrak{m}_{\nu},
\end{equation}
\begin{align*}
\mathfrak{k}_0
 &:= \{x \in \mathfrak{k} \mid \operatorname{ad}(\xi)(x) = 0\},
\\
\mathfrak{m}_0
 &:= \{y \in \mathfrak{m} \mid \operatorname{ad}(\xi)(y) = 0\},
\\
\mathfrak{k}_{\nu} 
&:=
\{x \in \mathfrak{k} \mid \operatorname{ad}(\xi)^2 (x) = - \nu^2 x\},
\\
\mathfrak{m}_{\nu} 
&:=
\{y \in \mathfrak{m} \mid \operatorname{ad}(\xi)^2 (y) = - \nu^2 y\}.
\end{align*}
By similar arguments as in \cite[p.\ 60]{Loo69II}, for each $\nu>0$ we can take bases $\{x_1^\nu, \cdots, x_{m(\nu)}^\nu\}$ of $\mathfrak{k}_\nu$ and $\{y_1^\nu, \cdots, y_{m(\nu)}^\nu\}$ of $\mathfrak{m}_\nu$ where $m(\nu) := \dim \mathfrak{k}_\nu = \dim \mathfrak{m}_\nu$ such that 
\begin{equation}\label{basis0}
[\xi, x^\nu_k] = - \nu y^\nu_k
, \qquad
[\xi, y^\nu_k] = \nu x^\nu_k.
\end{equation}
Thus a linear isomorphism $\varphi_\nu: \mathfrak{k}_\nu \rightarrow \mathfrak{m}_\nu$ is defined by 
\begin{equation}\label{isom}
\varphi_\nu(x) := - \frac{1}{\nu}[\xi, x].
\end{equation}

Let $\{\lambda\}$ denote the set of all distinct eigenvalues of the shape operator $A^N_\xi$. 
Set 
\begin{equation*}
S_\lambda := \operatorname{Ker}(A^N_\xi - \lambda \operatorname{id}).
\end{equation*}
Since $N$ is curvature adapted, for each $\nu \geq 0$ we have  the decomposition
\begin{equation*}
\begin{array}{ccccc}
\mathfrak{m} &=& T_{eK} N &\oplus& T^\perp_{eK} N \vspace{1mm}
\\
\cup&&\cup&&\cup
\\
\mathfrak{m}_\nu &=&  \mathfrak{m}_\nu \cap T_{eK} N  &\oplus&  \mathfrak{m}_\nu \cap T^\perp_{eK} N \vspace{1mm}
\\
&&\rotatebox{90}{$=$}&&
\\
&&\sum_{\lambda} ( \mathfrak{m}_\nu \cap S_\lambda ).&& 
\end{array}
\end{equation*}
For each $\nu \geq 0$ we set
\begin{equation*}
m(\nu,\lambda) :=\dim (\mathfrak{m}_\nu \cap S_\lambda)
, \qquad
m(\nu,\perp) :=\dim (\mathfrak{m}_\nu \cap T^\perp_{eK} N).
\end{equation*}
For each $\nu \geq 0$ and $\lambda$, choose 
\begin{align*}
&\{y^{(\nu, \lambda)}_1, \cdots , y^{(\nu, \lambda)}_{m(\nu, \lambda)}\}: \ \text{a basis of $\mathfrak{m}_\nu \cap S_\lambda$},
\\
&\{y^{(\nu, \perp)}_1, \cdots , y^{(\nu, \perp)}_{m(\nu, \perp)}\}:\  \text{a basis of $\mathfrak{m}_\nu \cap T^\perp_{eK} N$.}
\end{align*}
Then for each $\nu \geq 0$ we obtain
\begin{equation*}
\{y^{(\nu, \lambda)}_1, \cdots , y^{(\nu, \lambda)}_{m(\nu, \lambda)} \}_\lambda \cup \{y^{(\nu, \perp)}_1, \cdots , y^{(\nu, \perp)}_{m(\nu, \perp)}\}
: \ 
\text{a basis of $\mathfrak{m}_\nu$}.
\end{equation*} 
Thus for each $\nu > 0$, via an isomorphism (\ref{isom}) we obtain
\begin{equation*}
\{x^{(\nu, \lambda)}_1, \cdots , x^{(\nu, \lambda)}_{m(\nu, \lambda)} \}_\lambda \cup \{x^{(\nu, \perp)}_1, \cdots , x^{(\nu, \perp)}_{m(\nu, \perp)}\}
: \ 
\text{a basis of $\mathfrak{k}_\nu$.}
\end{equation*}
For $\nu = 0$  we choose and denote by 
\begin{equation*}
\{x_1^0, \cdots, x^0_{\dim \mathfrak{k}_0
}\}
: \ 
\text{a basis of $\mathfrak{k}_0
$}.
\end{equation*}
Note that these satisfy 
\begin{equation*}
\begin{array}{lcl}
\ [\xi, x^0_i] =0, 
&& 
[\xi, y^{(0, \lambda)}_j] = [\xi, y^{(0, \perp)}_l]= 0,
\\
\ [\xi, x_k^{(\nu, \lambda)}] = - \nu\,  y_k^{(\nu, \lambda)},
&&
\ [\xi, y_k^{(\nu, \lambda)}] = \nu\,  x_k^{(\nu, \lambda)},
\\
\ [\xi, x_r^{(\nu, \perp)}] = - \nu\, y_r^{(\nu, \perp)},
&& 
\ [\xi, y_r^{(\nu, \perp)}] = \nu\,  x_r^{(\nu, \perp)}.
\end{array}
\end{equation*}

Set $V(\mathfrak{g}) := V_\mathfrak{g} = H^0([0,1], \mathfrak{g})$. We decompose
\begin{equation*}
V(\mathfrak{g}) 
=
\sum_{\nu \geq 0} V(\mathfrak{k}_\nu) 
+ 
\sum_{\nu \geq 0}
\left(V(\mathfrak{m}_\nu \cap T_{eK} N) 
+ 
V(\mathfrak{m}_\nu \cap T_{eK}^\perp N)\right)
\end{equation*}
and equip a basis with each term above. Recall that there are well-known three kinds of orthonormal bases in $H^0([0,1], \mathbb{R})$:
\begin{align}
\label{basis1}&\{
1
, \ 
\sqrt{2}\, \cos 2n \pi t
, \ 
\sqrt{2}\, \sin 2 n \pi t
\}_{n = 1}^\infty \ ,
\\
\label{basis2}&
\{
1
, \ 
\sqrt{2}\, \cos n \pi t
\}_{n = 1}^\infty\ , 
\\
\label{basis3}&\{
\sqrt{2}\, \sin n \pi t
\}_{n = 1}^\infty\ .
\end{align}
For $\nu = 0$ we consider the following bases:
\begin{equation*}
\begin{array}{lcccc}
\text{a basis of $V(\mathfrak{k}_0
)$}&: && \hspace{-20mm}\{x_i^0 \sin n \pi t\}_{i, \, n} \ ,\hspace{-25mm}&
\\
\text{a basis of $V(\mathfrak{m}_0
 \cap T_{eK} N)$}&: &\{y_j^{(0, \lambda)}\}_{\lambda, \, j}&\cup&\{y_j^{(0, \lambda)} \cos n \pi t\}_{\lambda, \, j, \, n}\ ,
\\
\text{a basis of $V(\mathfrak{m}_0
 \cap T^\perp_{eK} N)$}&: &\{y^{(0, \perp)}_l\}_l& \cup &\{y^{(0, \perp)}_l \cos n \pi t\}_{l, \, n}\ .
\end{array}
\end{equation*}
For each $\nu >0$ we consider the following bases:
\begin{equation*}
\begin{array}{lcccc}
\text{a basis of $V(\mathfrak{k}_\nu)$}&: &
\{x_k^{(\nu, \lambda)} \sin n \pi t\}_{\lambda, \, k, \,n} 
&\cup& \{x^{(\nu, \perp)}_{r} \sin n \pi t\}_{r, \, n}\ ,
\\
\text{a basis of $V(\mathfrak{m}_\nu \cap T_{eK} N)$}&: &\{y^{(\nu, \lambda)}_k\}_{\lambda, \, k} & \cup&\{y^{(\nu, \lambda)}_k \cos n \pi t\}_{\lambda, \, n,\,k}\ ,
\\
\text{a basis of $V(\mathfrak{m}_\nu \cap T^\perp_{eK} N)$}&: &\{y^{(\nu, \perp)}_r\}_r&\cup &\{y^{(\nu, \perp)}_r \cos n \pi t\}_{n,\,r} \ .
\end{array}
\end{equation*}
Clearly all these bases form a basis of $V(\mathfrak{g}) = V_\mathfrak{g}$. Identifying $T_{\hat{0}} V_\mathfrak{g} \cong V_\mathfrak{g}$  and considering the orthogonal direct sum decomposition
\begin{equation*}
T_{\hat{0}} V_\mathfrak{g}
=
T_{\hat{0}} \Phi_K^{-1}(N)
\oplus
T^\perp_{eK} N 
, \qquad
X 
=
\left(X - \int_0^1 X(t)^\perp dt\right)
\oplus
\int_0^1 X(t)^\perp dt
\end{equation*}
we consequently obtain the following basis of $T_{\hat{0}} \Phi_K^{-1}(N)$: 
\begin{align*}
&
\{x_i^0 \sin n \pi t\}_{i, \, n}
\cup
\{y_j^{(0, \lambda)}\}_{\lambda, \,j} 
\cup
\{y^{(0, \lambda)}_j \cos n \pi t\}_{\lambda, \,j, \, n}
\cup
\{y^{(0, \perp)}_r \cos n \pi t\}_{r, \, n}
\\
& 
\ \cup \  
\bigcup_{\nu > 0}
\left(
\{x_k^{(\nu, \lambda)} \sin n \pi t\}_{\lambda,\,  k, \,n}
\cup
\{y^{(\nu, \lambda)}_k\}_{\lambda, \, k} 
\cup
\{y^{(\nu, \lambda)}_k \cos n \pi t\}_{\lambda, \, k, \, n}
\right)
\\
&
\ \cup \ 
\bigcup_{\nu > 0}
\left(
\{x^{(\nu, \perp)}_r \sin n \pi t\}_{r,n}
\cup
\{y^{(\nu, \perp)}_r \cos n \pi t\}_{r, \, n} 
\right).
\end{align*}

\begin{lem}\label{lem4.3}
\begin{align*}
&\textup{(i)}&&
A^{\Phi_K^{-1}(N)}_{\hat{\xi}}(x^0_i \sin n \pi t) = 0, \qquad 
A^{\Phi_K^{-1}(N)}_{\hat{\xi}}(y^{(0, \lambda)}_j)
=
\lambda y^{(0, \lambda)}_j,&&
\\
&\textup{(ii)}&&
A^{\Phi_K^{-1}(N)}_{\hat{\xi}}(y^{(0, \lambda)}_j \cos n \pi t)
= 
A^{\Phi_K^{-1}(N)}_{\hat{\xi}}(y^{(0, \perp)}_l \cos n \pi t)
=0, &&
\\
&\textup{(iii)}& &
A^{\Phi_K^{-1}(N)}_{\hat{\xi}}
(x_r^{(\nu, \perp)} \sin n \pi t)
=
- \frac{\nu}{n \pi} y^{(\nu, \perp)}_r  \cos n \pi t,&&
\\
&&&
A^{\Phi_K^{-1}(N)}_{\hat{\xi}}
(y_r^{(\nu, \perp)} \cos n \pi t)
= 
- \frac{\nu}{n \pi} x^{(\nu, \perp)}_r \sin n \pi t,&&
\\
&\textup{(iv)}&&A^{\Phi_K^{-1}(N)}_{\hat{\xi}}(y^{(\nu, \lambda)}_k)
=
\lambda y^{(\nu, \lambda)}_k
+
\frac{2 \nu}{\pi} \sum_{n = 1}^\infty \frac{1}{n} (x^{(\nu, \lambda)}_k\sin n \pi t ) ,&&
\\
&\textup{(v)}&&A^{\Phi_K^{-1}(N)}_{\hat{\xi}}(x^{(\nu, \lambda)}_k \sin n \pi t)
=
- \frac{\nu}{n \pi } y^{(\nu, \lambda)}_k (-1 + \cos n \pi t) ,&&
\\
&\textup{(vi)}&&A^{\Phi_K^{-1}(N)}_{\hat{\xi}}(y^{(\nu, \lambda)}_k \cos n \pi t)
=
- \frac{\nu}{n \pi} x^{(\nu, \lambda)}_k \sin n \pi t .&&
\end{align*}
\end{lem}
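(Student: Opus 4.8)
The plan is to read each value directly off the shape-operator formulas of the Proposition and its Corollary, which are available here because $G/K$ is symmetric and hence $[\mathfrak{m},\mathfrak{m}]\subset\mathfrak{k}$. Abbreviating $A^{\Phi_K^{-1}(N)}_{\hat\xi}$ to $A_{\hat\xi}$, I would split the listed basis of $T_{\hat 0}\Phi_K^{-1}(N)$ into three families, each matched to exactly one formula: the $\mathfrak{k}$-valued sine modes $x\sin n\pi t$ are handled by (\ref{so4}); the constant $\mathfrak{m}$-modes $y^{(\nu,\lambda)}_k$ (which lie in $T_{eK}N$) by (\ref{so3}); and the $\mathfrak{m}$-valued cosine modes $y\cos n\pi t$ by (\ref{so1}). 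Each formula outputs a $\mathfrak{g}$-valued path, which must then be re-expanded in the prescribed orthonormal Fourier basis; this re-expansion is where the stated closed forms, and in particular the infinite sum in (iv), arise.

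For the sine modes I would realize $x\sin n\pi t=-Z'$ with $Z(t):=x\,\frac{\cos n\pi t-1}{n\pi}$, noting $Z(0)=0$ and $Z(1)\in\mathfrak{k}$ so that $-Z'\in T_{\hat 0}F_K$ and (\ref{so4}) applies. A short computation gives $[\hat\xi,Z](t)=\frac{\cos n\pi t-1}{n\pi}[\xi,x]$ and $\int_0^1 Z\,dt=-\frac{x}{n\pi}$, whence $A_{\hat\xi}(x\sin n\pi t)=\frac{\cos n\pi t-1}{n\pi}[\xi,x]+\frac{1}{n\pi}[\xi,x]^\perp$. Using the relations (\ref{basis0}): for $x=x^0_i\in\mathfrak{k}_0$ the bracket $[\xi,x]$ vanishes, giving the first identity in (i); for $x=x^{(\nu,\perp)}_r$ one has $[\xi,x]=-\nu\, y^{(\nu,\perp)}_r$ with $y^{(\nu,\perp)}_r\in T^\perp_{eK}N$, so $[\xi,x]^\perp=[\xi,x]$ and the two constant terms cancel, leaving $-\frac{\nu}{n\pi}y^{(\nu,\perp)}_r\cos n\pi t$, which is (iii); for $x=x^{(\nu,\lambda)}_k$ the partner $y^{(\nu,\lambda)}_k$ lies in $T_{eK}N$, so now $[\xi,x]^\perp=0$, the cancellation does \emph{not} occur, and the surviving constant produces the extra $+\frac{\nu}{n\pi}y^{(\nu,\lambda)}_k$ term of (v). Thus the sole difference between (iii) and (v) is whether the $\mathfrak{m}_\nu$-partner is normal or tangent to $N$.

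The constant tangent modes are immediate from (\ref{so3}): since $y^{(\nu,\lambda)}_k\in S_\lambda$ one gets $A_{\hat\xi}(y^{(\nu,\lambda)}_k)=\lambda\,y^{(\nu,\lambda)}_k+(1-t)[\xi,y^{(\nu,\lambda)}_k]$. For $\nu=0$ the bracket vanishes, giving the second identity in (i); for $\nu>0$ one has $[\xi,y^{(\nu,\lambda)}_k]=\nu\, x^{(\nu,\lambda)}_k$, and substituting the Fourier sine expansion $1-t=\sum_{n\ge 1}\frac{2}{n\pi}\sin n\pi t$ (valid in $L^2([0,1])$ since (\ref{basis3}) is a complete orthonormal system) turns $\nu(1-t)x^{(\nu,\lambda)}_k$ into the infinite sum of (iv). For the cosine modes I would set $y\cos n\pi t=-Q'$ with $Q(t):=-y\,\frac{\sin n\pi t}{n\pi}$, so $Q(0)=Q(1)=0$ and (\ref{so1}) applies, giving $[\hat\xi,Q](t)=-\frac{\sin n\pi t}{n\pi}[\xi,y]$ while $\int_0^1 Q\,dt$ is a scalar multiple of $y$, so $[\xi,\int_0^1 Q\,dt]$ is a scalar multiple of $[\xi,y]$. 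For $\nu>0$ we have $[\xi,y]\in\mathfrak{k}_\nu$, so its $\perp$-projection into $\mathfrak{m}$ vanishes, and for $\nu=0$ the bracket $[\xi,y]$ itself vanishes; hence $A_{\hat\xi}(y\cos n\pi t)=-\frac{\sin n\pi t}{n\pi}[\xi,y]$, which equals $0$ in the $\nu=0$ cases (proving (ii)) and $-\frac{\nu}{n\pi}x\sin n\pi t$ in the $\nu>0$ cases (the second identity of (iii) and identity (vi)).

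The individual computations are routine trigonometric integrations; the real care lies in three places, and I expect this bookkeeping, rather than any single estimate, to be the main obstacle. First, one must keep the projection $^\perp$ aligned with the orthogonal splitting $\mathfrak{m}_\nu=(\mathfrak{m}_\nu\cap T_{eK}N)\oplus(\mathfrak{m}_\nu\cap T^\perp_{eK}N)$, since it is precisely this that separates (iii) from (v). Second, one must remember that each formula returns a path that has to be rewritten in the fixed orthonormal basis of $T_{\hat 0}\Phi_K^{-1}(N)$; the completeness of the systems (\ref{basis2}) and (\ref{basis3}) together with the sine expansion of $1-t$ are what yield the closed and infinite-sum expressions. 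Third, one should check that every output genuinely lies in $T_{\hat 0}\Phi_K^{-1}(N)$ — for instance that the cosine-mode outputs carry no constant normal component — which is automatic once the $\perp$-terms are seen to vanish as above.
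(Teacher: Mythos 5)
Your proposal is correct and follows essentially the same route as the paper: both read each entry off formulas (\ref{so1}), (\ref{so3}) and (\ref{so4}), using the same antiderivatives (e.g.\ $Z=\frac{1}{n\pi}x(-1+\cos n\pi t)$ and $Q=-\frac{1}{n\pi}y\sin n\pi t$), the bracket relations \eqref{basis0}, and the Fourier sine expansion of $1-t$ for (iv). The only differences are organizational (you group the computations by sine/constant/cosine modes rather than by item, and you consistently use \eqref{so4} with $Z(0)=0$, which is if anything slightly cleaner than the paper's stated choice of $Q$ for the first identity of (i)).
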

\begin{proof}
(i) and (ii): The second equality of (i) follows from \eqref{so3}. Let $Q$ be 
\begin{equation*}
\frac{1}{n \pi}
x_i^0 \cos n \pi t
, \quad
- \frac{1}{n \pi}
y^{(0, \lambda)}_j  \sin n \pi t
\quad \text{or} \quad
-\frac{1}{n \pi}
y^{(0, \perp)}_l \sin n \pi t.
\end{equation*}
By \eqref{so1} we obtain the first formula of (i) and formulas in (ii). (iii): Set
\begin{equation*}
Z_1 := \frac{1}{n \pi }x^{(\nu, \perp)}_r (-1 + \cos n \pi t)
, \quad
Z_2 := - \frac{1}{n \pi } y^{(\nu, \perp)}_r \sin n \pi t. 
\end{equation*}
Then we have
\begin{equation*}
[\xi, Z_1] 
=
- \frac{\nu}{n\pi } y^{(\nu, \perp)}_r (-1 + \cos n \pi t)
, \quad
[\xi, Z_2]
= 
- \frac{\nu}{n\pi} x^{(\nu, \perp)}_r \sin n \pi t.
\end{equation*}
Thus we have 
\begin{equation*}
\left[\xi, \int_0^1 Z_1(t) dt\right]^\perp
=
\frac{\nu}{n \pi} y^{(\nu, \perp)}_r 
, \quad
\left[\xi, \int_0^1 Z_2(t) dt\right]^\perp =0.
\end{equation*}
Hence by \eqref{so4} the desired equalities follow. (iv): By (\ref{so3}) we have
\begin{equation*}
A^{\Phi_K^{-1}(N)}_{\hat{\xi}}(y^{(\nu, \lambda)}_k)
=
\lambda y^{(\nu, \lambda)}_k
+
\left(1 - t \right) 
\nu
x^{(\nu, \lambda)}_k.
\end{equation*}
Since
$
\int_0^1 (1-t) \sin n \pi t \, dt = (n \pi)^{-1}
$ the Fourier expansion with respect to a basis \eqref{basis3} of a function 
$f: [0,1] \rightarrow \mathbb{R}$, $t \mapsto 1-t$  is given by
\begin{equation*}
f 
=
\frac{2}{\pi} \sum_{n = 1}^\infty \frac{1}{n} (\sin n \pi t ).
\end{equation*}
This shows the desired equality. (v), (vi): Set 
\begin{equation*}
Z_1 := \frac{1}{ n \pi }x_k^{(\nu, \lambda)}  (-1 + \cos  n \pi t)
, \quad 
Z_2 := - \frac{1}{ n \pi } y_k^{(\nu, \lambda)} \sin  n \pi t.
\end{equation*}
Then we have 
\begin{equation*}
[\hat{\xi}, Z_1]
=
- \frac{\nu}{ n \pi} y_k^{(\nu, \lambda)}  (-1 + \cos  n \pi t)
, \quad
[\hat{\xi}, Z_2]
=
-  \frac{\nu}{ n \pi } x_k^{(\nu, \lambda)} \sin  n \pi t,
\end{equation*}
Thus we have 
\begin{equation*}
\left[\xi, \int_0^1 Z_1(t) dt\right]^\perp 
= 
\left[\xi, \int_0^1 Z_2(t) dt \right]^\perp 
= 0.
\end{equation*}
Hence by \eqref{so4} we obtain the desired formulas.
\end{proof}

We come now to the principal curvatures of a PF submanifold $\Phi_K^{-1}(N)$: 
\begin{thm}\label{pc1}
Let $G/K$ be a compact symmetric space, $\Phi_K: V_\mathfrak{g} \rightarrow G/K$ the parallel transport map, $N$ a curvature adapted closed submanifold of $G/K$ through $e K \in G/K$, and $\xi \in T^\perp _{eK} N \subset \mathfrak{m}$. Denote by $\{\sqrt{-1}\, \nu\}$ the set of all distinct eigenvalues of $\operatorname{ad}(\xi): \mathfrak{g} \rightarrow \mathfrak{g}$
and by $\{\lambda\}$ the set of all  distinct eigenvalues of the shape operator $A^N_\xi$. For each $\nu>0$, each $\lambda$  and each $m \in \mathbb{Z}$ we set
\begin{equation*}
\mu 
= 
\mu(\nu, \lambda, m)
:=
\frac{\nu}{\arctan \frac{\nu}{\lambda}+ m \pi},
\end{equation*}
where we set $\arctan (\nu/\lambda) := \pi/2$ if $\lambda = 0$. Then the principal curvatures of a PF submanifold $\Phi^{-1}_K(N)$ in the direction of $\hat{\xi} \in T^\perp_{\hat{0}} \Phi^{-1}(N)$ are given by 
\begin{equation*}
\left\{0, \ \lambda, \ 
\frac{\nu}{n \pi}, \ 
\mu(\nu, \lambda, m)
\right\}_{\lambda, \ \nu > 0, \ n \in \mathbb{Z}\backslash \{0\}, \  m \in \mathbb{Z}}.
\end{equation*}
The eigenfunctions and the multiplicities are given in the following table.
\begin{table}[h]
\renewcommand{\arraystretch}{2.0}
\begin{tabular}{|c|c|c|} \hline
eigenvalue & basis of eigenfunctions & multiplicity
\\ \hline 
$0$ & $
\{x_i^0 \sin n \pi t, 
y^{(0, \lambda)}_j \cos n \pi t,  y^{(0, \perp)}_l \cos n \pi t \}_{n \in \mathbb{Z}_{\geq 1}, \, \lambda,\, i, \, j, \, l}
$& $\infty$  
\\ \hline 
$\lambda$ & $\{y^{(0, \lambda)}_j\}_j$
& $m(0, \lambda)$  
\\ \hline
${\displaystyle \frac{\nu}{n \pi}}$& $
\{x^{(\nu, \perp)}_r \sin n \pi t - y^{(\nu, \perp)}_r \cos n \pi t
\}_r
$ & $m(\nu, \perp)$
\\ \hline
$\mu(\nu, \lambda, m)$& $
\left\{
\sum_{n \in \mathbb{Z}}
\frac{\nu}{n \pi \mu+ \nu} 
(x^{(\nu, \lambda)}_k \sin n \pi t
+
y^{(\nu, \lambda)}_k \cos n \pi t)
\right\}_{k}$ & $m(\nu, \lambda)$
\\ \hline 
\end{tabular}
\end{table}
\end{thm}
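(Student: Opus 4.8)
The plan is to exploit the fact that, by Lemma~\ref{lem4.3}, the shape operator $A := A^{\Phi_K^{-1}(N)}_{\hat\xi}$ preserves the decomposition of $T_{\hat 0}\Phi_K^{-1}(N)$ furnished by the explicit basis constructed just above the lemma. Since $A$ is compact and self-adjoint, it suffices to diagonalize its restriction to each invariant summand and then assemble the eigenvalues, eigenfunctions and multiplicities. First I would dispose of the elementary blocks: parts~(i) and~(ii) immediately give eigenvalue $0$ on the span of $\{x_i^0\sin n\pi t\}$, $\{y_j^{(0,\lambda)}\cos n\pi t\}$, $\{y_l^{(0,\perp)}\cos n\pi t\}$ and eigenvalue $\lambda$ on the span of $\{y_j^{(0,\lambda)}\}$. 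For the normal blocks, part~(iii) shows that for fixed $\nu>0$, $r$ and $n\ge 1$ the two-dimensional space spanned by $x^{(\nu,\perp)}_r\sin n\pi t$ and $y^{(\nu,\perp)}_r\cos n\pi t$ is invariant, with $A$ acting by the off-diagonal matrix of entries $-\nu/(n\pi)$; diagonalizing gives eigenvalues $\pm\nu/(n\pi)$ with eigenvectors $x^{(\nu,\perp)}_r\sin n\pi t\mp y^{(\nu,\perp)}_r\cos n\pi t$. The parity identities $\sin(-n\pi t)=-\sin n\pi t$ and $\cos(-n\pi t)=\cos n\pi t$ let me absorb the sign by letting $n$ range over $\mathbb{Z}\setminus\{0\}$, which is exactly the bookkeeping in the third row of the table.

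The heart of the proof is the mixed tangential block. Fix $\nu>0$, an eigenvalue $\lambda$ of $A^N_\xi$ and an index $k$; by parts~(iv)--(vi) the closed span $W$ of $y_k:=y^{(\nu,\lambda)}_k$ together with $\{x_k\sin n\pi t,\, y_k\cos n\pi t\}_{n\ge1}$ (writing $x_k:=x^{(\nu,\lambda)}_k$) is $A$-invariant. I would seek an eigenvector $w=a_0 y_k+\sum_{n\ge1}(b_n\, x_k\sin n\pi t+c_n\, y_k\cos n\pi t)$ with eigenvalue $\mu\neq0$. Comparing coefficients in $Aw=\mu w$ via~(iv)--(vi) yields the linear system
\[
\mu a_0=\lambda a_0+\sum_{n\ge1}\tfrac{\nu}{n\pi}b_n,\qquad
\mu b_n=\tfrac{2\nu}{n\pi}a_0-\tfrac{\nu}{n\pi}c_n,\qquad
\mu c_n=-\tfrac{\nu}{n\pi}b_n .
\]
The last two relations solve for $b_n,c_n$ in terms of $a_0$ (and force the trivial solution when $a_0=0$, so $\mu=0$ produces no $L^2$-eigenvector here), giving $b_n=2\nu\mu n\pi\, a_0/((\mu n\pi)^2-\nu^2)$ and $c_n=-2\nu^2 a_0/((\mu n\pi)^2-\nu^2)$; the denominators never vanish since $\mu=\nu/(n\pi)$ is incompatible with the eigenvalue condition obtained below. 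Substituting into the first relation and cancelling $a_0$ produces the single transcendental consistency equation
\[
\mu-\lambda=2\nu^2\mu\sum_{n=1}^\infty\frac{1}{(\mu n\pi)^2-\nu^2}.
\]

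It remains to identify this equation, which is the step I expect to require the most care. Applying the partial-fraction (Mittag--Leffler) expansion $\pi z\cot\pi z=1+\sum_{n\ge1}2z^2/(z^2-n^2)$ with $z=\nu/(\mu\pi)$ evaluates the series as $\tfrac{1}{2\nu^2}-\tfrac{1}{2\mu\nu}\cot(\nu/\mu)$, whereupon the consistency equation collapses to $\lambda=\nu\cot(\nu/\mu)$, i.e.\ $\tan(\nu/\mu)=\nu/\lambda$. Running over all branches gives $\nu/\mu=\arctan(\nu/\lambda)+m\pi$, hence $\mu=\mu(\nu,\lambda,m)$ for $m\in\mathbb{Z}$ (with the convention $\arctan(\nu/0)=\pi/2$), and in particular $\mu\neq\nu/(n\pi)$ as claimed. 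Back-substituting $b_n,c_n$ and combining the $\pm n$ contributions through the same parity identity rewrites the eigenfunction in the compact form $\sum_{n\in\mathbb{Z}}\tfrac{\nu}{n\pi\mu+\nu}(x_k\sin n\pi t+y_k\cos n\pi t)$ of the table, the coefficient being $O(1/n)$ so that it indeed lies in $H^0$. Finally, since $A|_W$ is compact self-adjoint with trivial kernel, the spectral theorem guarantees that the eigenvectors found for $m\in\mathbb{Z}$ exhaust $W$; letting $k$ range over $1,\dots,m(\nu,\lambda)$ accounts for the multiplicity, and collecting all blocks yields the stated list of principal curvatures.
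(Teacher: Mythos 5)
Your proposal is correct and follows essentially the same route as the paper's own proof: dispose of the blocks handled by Lemma \ref{lem4.3} (i)--(iii), reduce the mixed block to the infinite linear system in the coefficients $(a_0,b_n,c_n)$ (the paper's $(c,a_n,b_n)$), evaluate the resulting series by the cotangent partial-fraction identity (the paper cites the equivalent formula from Gradshteyn--Ryzhik) to obtain $\lambda=\nu\cot(\nu/\mu)$, and resum the coefficients into the eigenfunction $\sum_{n\in\mathbb{Z}}\frac{\nu}{n\pi\mu+\nu}(x^{(\nu,\lambda)}_k\sin n\pi t+y^{(\nu,\lambda)}_k\cos n\pi t)$. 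Your explicit appeal to the spectral theorem for the compact self-adjoint restriction to each invariant block, to show the eigenvectors found exhaust it, is a welcome touch that the paper leaves implicit.
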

\noindent
\begin{rem}
Let $\mathfrak{a}$ be a maximal abelian subspace of $\mathfrak{m}$ and $\Delta^+$ the set of positive roots satisfying $\alpha(\xi) \geq 0$ for each $\alpha \in \Delta^+$. Then for each $\nu>0$ there exists $\alpha \in \Delta^+$ such that $\nu = \alpha(\xi)$ and thus the above eigenvalues coincide with those given by Koike \cite[Theorem 3.3]{Koi02}. 
However note that the eigenspace  decomposition \cite[p.\ 73, line 3]{Koi02} does not hold in general.
\end{rem}
\begin{proof}[Proof of Theorem  \ref{pc1}] 
By Lemma \ref{lem4.3} (i) - (iii) it follows that $0$, $\lambda$ and  $\frac{\nu}{n \pi}$ are eigenvalues of $A^{\Phi_K^{-1}(N)}_{\hat{\xi}}$ with eigenfunctions described  above. Let $W$ denote a subspace of $T_{\hat{0}}\Phi_K^{-1}(N)$ spanned by all such eigenfunctions and consider its orthogonal complements $W^\perp$ in $T_{\hat{0}}\Phi_K^{-1}(N)$. We know that one basis of $W^\perp$ is given by 
\begin{equation*}
\bigcup_{\nu>0}
\left(
\{y^{(\nu, \lambda)}_k\}_{\lambda, \, k} 
\cup
\{x_k^{(\nu, \lambda)} \sin n \pi t
, \ 
y^{(\nu, \lambda)}_k \cos n \pi t\}_{\lambda, \ k, \ n \in \mathbb{Z}_{\geq 1}}
\right).
\end{equation*}
In particular Lemma \ref{lem4.3} (iv) - (vi) show that for each $\nu> 0$, $\lambda$ and $k$, a subspace of $T_{\hat{0}}\Phi_K^{-1}(N)$ spanned by 
\begin{equation*}
\{y^{(\nu, \lambda)}_k\} 
\cup
\{
x_k^{(\nu, \lambda)} \sin n \pi t
, \ 
y^{(\nu, \lambda)}_k \cos n \pi t
\}_{n \in \mathbb{Z}_{\geq 1}}
\end{equation*}
is invariant  under $A^{\Phi_K^{-1}(N)}_{\hat{\xi}}$. We denote such a subspace by $W^\perp_{(\lambda, \nu, k)}$. Suppose that for constants $a_n, b_n, c \in \mathbb{R}$
\begin{equation*}
\varphi :=
c y^{(\nu, \lambda)}_k
+
\sum_{n = 1}^\infty 
\{
a_n (x^{(\nu, \lambda)}_k \sin n \pi t)
+
b_n (y^{(\nu, \lambda)}_k \cos n \pi t)
\}
\quad \in W^\perp_{(\lambda,\, \nu, \, k)}
\end{equation*}
is a (nonzero) eigenfunction of $A^{\Phi_K(N)}_{\hat{\xi}}$ for some eigenvalue $\mu$. 
By Lemma \ref{lem4.3} (iv) - (vi) we have
\begin{align*}
A^{\Phi_K^{-1}(N)}_{\hat{\xi}}(\varphi)
&=
\left(
c \lambda 
+
\frac{\nu}{\pi}
\sum_{n=1}^\infty
\frac{a_n}{n}
\right) 
y^{(\nu, \lambda)}_k
\\
&\quad 
+
\frac{\nu}{\pi}
\sum_{n=1}^\infty
\frac{2c - b_n}{n} 
(x^{(\nu, \lambda)}_k \sin n \pi t)
-
\frac{\nu}{\pi}
\sum_{n=1}^\infty
\frac{a_n}{n} 
(y^{(\nu, \lambda)}_k \cos n \pi t).
\end{align*}
Comparing with 
\begin{equation*}
\mu \varphi 
=
\mu c y^{(\nu, \lambda)}_k
+
\sum_{n = 1}^\infty 
\{
\mu a_n (x^{(\nu, \lambda)}_k \sin n \pi t)
+
\mu  b_n (y^{(\nu, \lambda)}_k \cos n \pi t)
\}
\end{equation*}
we obtain a system of equations
\begin{align}
& \label{req1}
c \lambda 
+
\frac{\nu}{\pi}
\sum_{n=1}^\infty
\frac{a_n}{n}
=
c \mu,
\\
&\label{req2}
\frac{\nu}{\pi}
\frac{2 c - b_n}{n}
=
\mu a_n,
\\
& \label{req3}
-
\frac{\nu}{\pi}
\frac{a_n}{n} 
=
\mu b_n .
\end{align}

Summing (\ref{req3}) with respect to $n \in \mathbb{Z}_{\geq 1}$ we have 
\begin{equation*}
- \frac{\nu}{\pi}
 \sum_{n = 1}^\infty \frac{a_n}{n} 
= 
\mu 
\sum_{n = 1}^\infty
b_n.
\end{equation*}
Applying this to (\ref{req1}) we obtain \begin{equation}\label{key1}
c \lambda - \mu \sum_{n =1}^\infty b_n 
=
c \mu.
\end{equation}

On the other hand, multiplying (\ref{req3}) by $\mu$ we have 
\begin{equation*}
- \frac{\nu}{n \pi} \mu a_n 
=
\mu^2 b_n .
\end{equation*}
Applying (\ref{req2}) to this we obtain
\begin{equation*}
-2 c \left(\frac{\nu}{n \pi} \right)^2 
=
b_n \left(
\mu^2 - \left(\frac{\nu}{n \pi} \right)^2 
\right)
\end{equation*}
for \emph{all} $n \in \mathbb{Z}_{\geq 1}$. Note that this implies $c \neq 0$, $\mu \neq 0$ and 
$\mu ^2 - (\frac{\nu}{n \pi})^2 \neq 0$
for all $n \in \mathbb{Z}$. Therefore without loss of generality we can (and will) assume $c = 1$ from now on, and 
\begin{equation}\label{aaa}
b_n 
=
\frac{-2 r^2}{n^2 -r^2}
, \quad
\text{where}
\quad r := \frac{\nu}{\pi \mu}.
\end{equation}

From (\ref{key1}), (\ref{aaa}) and the standard formula (see \cite[1.449 - 4]{GR07}): 
\begin{equation*}
\sum_{n = 1}^\infty \frac{1}{n^2 - a^2}
=
\frac{1}{2a^2} - \frac{\pi}{2a} \cot(\pi a)
, \quad a \in \mathbb{R} \backslash \mathbb{Z}
\end{equation*}
we have
\begin{align*}
\lambda - \mu
&=
\mu \sum_{n = 1}^\infty b_n
=
- \mu + \nu \cot (\nu/\mu).
\end{align*}
Thus  
\begin{equation*}
\mu
=
\frac{\nu}{\arctan (\nu/\lambda)+ m \pi}
, \quad
m \in \mathbb{Z}.
\end{equation*}

By \eqref{aaa} we have
\begin{align*}
b_n 
&=
\frac{-2 \nu^2}{(n \pi \mu)^2 - \nu^2}
=
\frac{-2(\arctan(\nu/\lambda) + m \pi)^2}
{(n \pi)^2 - (\arctan(\nu/\lambda) + m \pi)^2}
\\
&=
(\arctan(\nu/\lambda) + m \pi) 
\\
& \quad  \times 
\left(
\frac{1}{n\pi + (\arctan(\nu/\lambda) + m \pi)}
-
\frac{1}{n\pi - (\arctan(\nu/\lambda) + m \pi)}
\right).
\end{align*}
By \eqref{req3} we have
\begin{align*}
a_n 
&=
- n \pi b_n \mu \nu^{-1}
=
\frac{2n \pi (\arctan(\nu/\lambda) + m \pi)}
{(n \pi)^2 - (\arctan(\nu/\lambda) + m \pi)^2}
\\
&= 
(\arctan(\nu/\lambda) + m \pi) 
\\
& \quad  \times 
\left(
\frac{1}{n\pi + (\arctan(\nu/\lambda) + m \pi)}
+
\frac{1}{n\pi - (\arctan(\nu/\lambda) + m \pi)}
\right).
\end{align*}
Therefore we obtain
\begin{align*}
\varphi 
&=
y^{(\nu, \lambda)}_k
+
\sum_{n\in \mathbb{Z} \backslash\{0\}} 
\frac{\arctan(\nu/\lambda) + m \pi}{n\pi + (\arctan(\nu/\lambda) + m \pi)}
\{
(x^{(\nu, \lambda)}_k \sin n \pi t)
+
(y^{(\nu, \lambda)}_k \cos n \pi t)
\}
\\
&=
\sum_{n \in \mathbb{Z}}
\frac{\nu}{n\pi \mu + \nu}
\{(x^{(\nu, \lambda)}_k \sin n \pi t)
+
(y^{(\nu, \lambda)}_k \cos n \pi t)\}.
\end{align*}
This proves the theorem.
\end{proof}
Considering the case that $N = \{eK\}$ we obtain: 
\begin{cor}\label{fiber1}
Let $G/K$ be a compact symmetric space. Take $\xi \in T_{eK} (G/K) = \mathfrak{m}$. Denote by $\{\sqrt{-1}\, \nu\}$ the set of all distinct eigenvalues of $\operatorname{ad}(\xi): \mathfrak{g} \rightarrow \mathfrak{g}$. 
Then the principal curvatures of the fiber $\Phi^{-1}_K(eK)$ in the direction of $\hat{\xi} \in T^\perp_{\hat{0}} \Phi_K^{-1}(eK)$ are given by 
\begin{equation*}
\left\{0,  
\ \frac{\nu}{n \pi}
\right\}_{\nu > 0, \ n \in \mathbb{Z}\backslash \{0\}}.
\end{equation*}
Denoting by $\{x_i^0\}_i$ a basis of $\mathfrak{k}_0
$, by $\{y_j^0\}_j$ a basis of $\mathfrak{m}_0
$ and by $\{x^\nu_k\}_k$, $\{y^\nu_k\}_k$ the bases defined by \eqref{basis0}, the eigenfunctions and the multiplicities are given in the following table.
\begin{table}[h]
\renewcommand{\arraystretch}{2.0}
\begin{tabular}{|c|c|c|} \hline
eigenvalue & basis of eigenfunctions & multiplicity 
\\ \hline 
$0$ & $
\{x_i^{0} \sin n \pi t,  y^{0}_j \cos n \pi t \}_{n \in \mathbb{Z}_{\geq 1}, \, \lambda,\, i, \, j}
$& $\infty$  
\\ \hline 
${\displaystyle \frac{\nu}{n \pi}}$& $
\{x^{\nu}_k \sin n \pi t - y^{\nu}_k \cos n \pi t
\}_k
$ & $m(\nu)$
\\ \hline 
\end{tabular}
\end{table}
\end{cor}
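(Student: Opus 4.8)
The plan is to obtain the statement as the degenerate case $N = \{eK\}$ of Theorem \ref{pc1}. First I would observe that a single point is a zero-dimensional closed submanifold of $G/K$ which is trivially curvature adapted, since $T_{eK}N = \{0\}$ renders both defining conditions of the curvature-adapted property vacuous. For such an $N$ the normal space is the whole tangent space, $T^\perp_{eK}N = \mathfrak{m}$, so an arbitrary $\xi \in \mathfrak{m} = T_{eK}(G/K)$ is a legitimate choice of normal vector and its horizontal lift $\hat{\xi}$ is normal to the fiber $\Phi_K^{-1}(eK) = F_K$ at $\hat{0}$. Hence Theorem \ref{pc1} applies verbatim.

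Next I would trace how the data entering Theorem \ref{pc1} degenerate in this case. Since $N$ is a point, its shape operator $A^N_\xi$ acts on the trivial space $T_{eK}N = \{0\}$, so the index set $\{\lambda\}$ of its eigenvalues is empty. Consequently the two eigenvalue families $\lambda$ and $\mu(\nu,\lambda,m)$ of the theorem do not occur at all, leaving only $0$ and $\nu/(n\pi)$. Moreover the orthogonal decomposition
\begin{equation*}
\mathfrak{m}_\nu = (\mathfrak{m}_\nu \cap T_{eK}N) \oplus (\mathfrak{m}_\nu \cap T^\perp_{eK}N)
\end{equation*}
collapses to $\mathfrak{m}_\nu = \mathfrak{m}_\nu \cap T^\perp_{eK}N$, whence $m(\nu,\perp) = \dim \mathfrak{m}_\nu =: m(\nu)$ and $m(\nu,\lambda) = 0$. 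Under this identification the bases $\{y^{(\nu,\perp)}_r\}$ and $\{x^{(\nu,\perp)}_r\}$ become bases $\{y^\nu_k\}$ of $\mathfrak{m}_\nu$ and $\{x^\nu_k\}$ of $\mathfrak{k}_\nu$ related by \eqref{basis0}, while for $\nu = 0$ the basis $\{y^{(0,\perp)}_l\}$ becomes a basis $\{y^0_j\}$ of $\mathfrak{m}_0$ and $\{x^0_i\}$ remains a basis of $\mathfrak{k}_0$.

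Finally I would read off the table of Theorem \ref{pc1} under these substitutions. The eigenvalue $0$ retains the eigenfunctions $x^0_i\sin n\pi t$ and $y^0_j\cos n\pi t$, the entries $y^{(0,\lambda)}_j\cos n\pi t$ disappearing together with the empty index set $\{\lambda\}$, and the eigenvalue $\nu/(n\pi)$ retains the eigenfunctions $x^\nu_k\sin n\pi t - y^\nu_k\cos n\pi t$ with multiplicity $m(\nu)$. This is precisely the asserted eigenvalue list and table. There is no genuine obstacle here beyond this bookkeeping; the only point requiring a moment's care is checking that the curvature-adapted hypothesis of Theorem \ref{pc1} is vacuously satisfied by a point and that $\hat{\xi}$ really exhausts the normal directions of the fiber, both of which are immediate. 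Alternatively, the same conclusion can be extracted directly from Lemma \ref{lem4.3}(i)--(iii) restricted to the normal directions in $\mathfrak{m}$, bypassing the $\lambda$- and $\mu$-computations entirely.
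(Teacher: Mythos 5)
Your proof is correct and takes essentially the same route as the paper, which introduces this corollary with the single phrase ``Considering the case that $N = \{eK\}$ we obtain'' and leaves the specialization of Theorem \ref{pc1} unproved. Your bookkeeping --- the vacuous curvature-adapted condition, the empty eigenvalue set $\{\lambda\}$ killing the $\lambda$- and $\mu(\nu,\lambda,m)$-families, and the identification $m(\nu,\perp) = \dim\mathfrak{m}_\nu = m(\nu)$ --- is precisely what the paper leaves implicit.
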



The principal curvatures of $\pi^{-1}(N)$ are  given by the following proposition, which is proved by the straightforward computations using the formula (\ref{so7}). 
\begin{prop}\label{pc2}
With notation as in Theorem \ref{pc1}, for each $\nu>0$ and each $\lambda$ we set
\begin{align*}
\kappa_+ 
&=
\kappa_+(\nu, \lambda)
:=
\frac{1}{2}(\lambda + \sqrt{\lambda^2 + \nu^2}),
\\
\kappa_-
&=
\kappa_-(\nu, \lambda)
:=
\frac{1}{2}(\lambda - \sqrt{\lambda^2 + \nu^2}).
\end{align*}
Then the principal curvatures of $\pi^{-1}(N)$ in the direction of $\xi \in T^\perp_e \pi^{-1}(N) \cong T^\perp _{eK} N$ are given by  
\begin{equation*}
\left\{
0, \ \lambda, \ \kappa_+(\nu, \lambda)
, \ \kappa_-(\nu, \lambda)
\right\}_{\lambda, \  \nu > 0}.
\end{equation*}
The eigenfunctions and the multiplicities are given in the following table.

\begin{table}[h]
\renewcommand{\arraystretch}{2.0}
\begin{tabular}{|c|c|c|} \hline
eigenvalue & basis of eigenfunctions & multiplicity
\\ \hline 
$0$ 
&
$\{x_i^0\}_i, \ \{x^{(\nu, \perp)}_r\}_{r, \, \nu}$
&
$
\dim \mathfrak{k}_0
 + \dim T^\perp_{eK} N
$   
\\ \hline 
$\lambda$ & $\{y^{(0, \lambda)}_j\}_j$
& $m(0, \lambda)$
\\ \hline
$
\kappa_+(\nu, \lambda) $
&$\{
\nu x^{(\nu, \lambda)}_k + 2 \kappa_+ y^{(\nu, \lambda)}_k\}_k$
&
$m(\nu, \lambda)$
\\ \hline
$\kappa_-(\nu, \lambda)$ 
&$\{\nu  x^{(\nu, \lambda)}_k + 2 \kappa_- y^{(\nu, \lambda)}_k\}_k$
&
$m(\nu, \lambda)$
\\ \hline 
\end{tabular}
\end{table}
\end{prop}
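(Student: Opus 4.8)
The plan is to derive everything from formula (\ref{so7}), which already expresses the shape operator $A^{\pi^{-1}(N)}_\xi$ on $T_e\pi^{-1}(N) = \mathfrak{k}\oplus T_{eK}N$ entirely in terms of $A^N_\xi$ and the bracket with $\xi$. The point of the curvature adapted hypothesis is that $A^N_\xi$ commutes with $R_\xi = -\operatorname{ad}(\xi)^2$ on $T_{eK}N$, so the simultaneous eigenbasis assembled before Lemma \ref{lem4.3} is available: the vectors $\{x^0_i\}$ of $\mathfrak{k}_0$, the vectors $\{y^{(0,\lambda)}_j\}$ of $\mathfrak{m}_0\cap S_\lambda$, and for each $\nu>0$ the paired vectors $\{x^{(\nu,\lambda)}_k, y^{(\nu,\lambda)}_k\}$ and $\{x^{(\nu,\perp)}_r\}$ obeying the bracket relations (\ref{basis0}). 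Discarding the purely normal directions $y^{(0,\perp)}_l, y^{(\nu,\perp)}_r\in T^\perp_{eK}N$, these form a basis of $T_e\pi^{-1}(N)$, so it suffices to evaluate (\ref{so7}) on each of them.

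First I would dispatch the flat directions. For $v=x^0_i$ the term $-\tfrac12[\xi,x^0_i]^\top$ vanishes since $[\xi,x^0_i]=0$, and for $v=x^{(\nu,\perp)}_r$ we get $-\tfrac12[\xi,x^{(\nu,\perp)}_r]^\top=\tfrac{\nu}{2}(y^{(\nu,\perp)}_r)^\top=0$ because $y^{(\nu,\perp)}_r$ is normal to $N$; hence all of these lie in the kernel and contribute the eigenvalue $0$. For $v=y^{(0,\lambda)}_j$ the bracket term drops out (as $y^{(0,\lambda)}_j\in\mathfrak{m}_0$), so (\ref{so7}) returns $A^N_\xi(y^{(0,\lambda)}_j)=\lambda y^{(0,\lambda)}_j$, giving the eigenvalue $\lambda$.

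The only substantive step is the behaviour on the pairs with $\nu>0$. Feeding (\ref{so7}) the relations (\ref{basis0}) I would show that each plane $\operatorname{span}\{x^{(\nu,\lambda)}_k,y^{(\nu,\lambda)}_k\}$ is invariant, with
\begin{equation*}
A^{\pi^{-1}(N)}_\xi(x^{(\nu,\lambda)}_k)=\tfrac{\nu}{2}\,y^{(\nu,\lambda)}_k,\qquad
A^{\pi^{-1}(N)}_\xi(y^{(\nu,\lambda)}_k)=\tfrac{\nu}{2}\,x^{(\nu,\lambda)}_k+\lambda\,y^{(\nu,\lambda)}_k,
\end{equation*}
so that $A^{\pi^{-1}(N)}_\xi$ is represented there by the symmetric matrix $\left(\begin{smallmatrix}0 & \nu/2\\ \nu/2 & \lambda\end{smallmatrix}\right)$. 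Its characteristic equation $\kappa^2-\lambda\kappa-\nu^2/4=0$ has roots $\kappa_\pm=\tfrac12(\lambda\pm\sqrt{\lambda^2+\nu^2})$, with eigenvector $\nu x^{(\nu,\lambda)}_k+2\kappa_\pm y^{(\nu,\lambda)}_k$ for each sign, precisely the entries in the table. Running over all $\nu>0$, $\lambda$ and $k$ then yields the claimed eigenvalues $\kappa_+(\nu,\lambda)$ and $\kappa_-(\nu,\lambda)$.

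I do not expect a serious obstacle: once (\ref{so7}) and the curvature adapted simultaneous eigenbasis are in hand, the whole argument is finite-dimensional linear algebra performed block by block on the $\operatorname{ad}(\xi)^2$-eigenspaces, and since $\kappa_+\kappa_-=-\nu^2/4\neq0$ these eigenvalues never collide with the kernel. The only place demanding care is the accounting of multiplicities and the verification that the listed eigenvectors exhaust $T_e\pi^{-1}(N)$, which comes down to matching dimensions of each block against the basis constructed before Lemma \ref{lem4.3}.
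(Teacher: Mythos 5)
Your proposal is correct and is essentially the paper's own proof: the paper proves Proposition \ref{pc2} precisely by ``straightforward computations using the formula (\ref{so7})'', which is exactly what you carry out --- evaluating (\ref{so7}) on the simultaneous eigenbasis built from (\ref{basis0}), observing that each plane $\operatorname{span}\{x^{(\nu,\lambda)}_k, y^{(\nu,\lambda)}_k\}$ is invariant with matrix $\bigl(\begin{smallmatrix}0 & \nu/2\\ \nu/2 & \lambda\end{smallmatrix}\bigr)$, and reading off the eigenvalues $\kappa_\pm$ and eigenvectors $\nu x^{(\nu,\lambda)}_k + 2\kappa_\pm y^{(\nu,\lambda)}_k$. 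Your block-by-block computations and the resulting table entries all check out, so there is nothing to add.
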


\section{The austere property}\label{austerevia}

In this section we study the austere property of PF submanifolds obtained through the parallel transport map $\Phi_K$. Notice that even if $N$ is an austere curvature adapted  submanifold of a compact symmetric space $G/K$, it is not clear in general whether the inverse image $\Phi_K^{-1}(N)$ is austere or not; according to Theorem \ref{pc1} it is not clear whether the set of eigenvalues with \emph{multiplicities} of the shape operator is invariant under the multiplication by $(-1)$ or not. 
From this reason here we will restrict our attention further to the case that $G/K$ is a sphere and show that in this case $N$ is austere if and only if $\Phi_K^{-1}(N)$ is austere (Theorem \ref{sphere2}). 

Let $S^{l}(r) = G/K$ denote the $l$-dimensional sphere of radius $r>0$, where $l \in \mathbb{Z}_{\geq 1}$ and $(G,K) =(SO(l+1), SO(l))$. Note that in this case any submanifold of $G/K$ is automatically curvature adapted. Let $N$ be a closed submanifold of $G/K$. Suppose $eK \in N$ and fix $\xi \in T^\perp_{eK} N$. Then for $v \in T_{eK} N$ and $\eta \in T^\perp_{eK} N$ the Jacobi operator $R_\xi$ satisfies
\begin{equation*}
R_\xi(v) := \frac{\|\xi\|^2}{r^2} v, 
\qquad 
R_\xi(\eta) := 
\frac{1}{r^2} \{\|\xi\|^2\eta - \langle \eta, \xi\rangle \xi \}.
\end{equation*}
Thus in this case the eigenspace decomposition (\ref{esd1}) of $\mathfrak{m}$ is given by 
\begin{equation*}
\mathfrak{m} = \mathfrak{m}_0
 + \mathfrak{m}_{\nu}
, \quad \quad
\text{where}
\quad \nu := \|\xi\|/r,
\end{equation*}
\begin{align*}
&\mathfrak{m}_0
 =  
\mathbb{R} \xi \subset T^\perp_{eK} N,
\\
&
\mathfrak{m}_{\nu} = T_{eK} N \oplus 
\{\eta \in T^\perp _{eK} N \mid \eta \perp \xi\}.
\end{align*}
In particular we have 
\begin{align*}
&\mathfrak{m}_0
 \cap T_{eK} N = \{0\}
, \qquad 
\mathfrak{m}_0
 \cap T^\perp_{eK} N = \mathbb{R}\xi, 
\\
&\mathfrak{m}_\nu \cap T_{eK} N = T_{eK} N
, \quad
\mathfrak{m}_\nu \cap T^\perp_{eK} N = \{\eta \in T^\perp _{eK} N \mid \eta \perp \xi\}.
\end{align*}
Hence by Proposition \ref{pc2} the principal curvatures of $\pi^{-1}(N)$ in the direction of $\xi \in T^\perp_e \pi^{-1}(N) \cong T^\perp_{eK} N$ are given by
\begin{equation}\label{eigen4.1}
\left\{
0, \ \kappa_+(\|\xi\|/r, \lambda)
, \ \kappa_-(\|\xi\|/r, \lambda)
\right\}_{\lambda}.
\end{equation}
Further by Theorem \ref{pc1} the principal curvatures of a PF submanifold $\Phi^{-1}_K(N)$ in the direction of $\hat{\xi} \in T^\perp_{\hat{0}} \Phi^{-1}(N)$ are given by
\begin{equation}\label{eigen4.2}
\left\{0, \ 
\frac{\|\xi\|}{r n \pi}, \ 
\mu(\|\xi\|/r, \lambda, m)
\right\}_{\lambda, \ n \in \mathbb{Z}\backslash \{0\}, \  m \in \mathbb{Z}}.
\end{equation}
Notice that in this case the multiplicities of 
\begin{equation*}
\lambda
,\quad
\kappa_+(\|\xi\|/r, \lambda)
, \quad
\kappa_-(\|\xi\|/r, \lambda)
, \quad
\mu(\|\xi\|/r, \lambda, m)
\end{equation*}
are the same for each $\lambda$.
\begin{thm}\label{sphere2}
Let $N$ be a closed submanifold of the $l$-dimensional sphere $S^{l}(r) = G/K$ of radius $r>0$, where $l \in \mathbb{Z}_{\geq 1}$ and $(G,K) =(SO(l+1), SO(l))$. Then the the following are equivalent:
\begin{enumerate}
\item[(i)] $N$ is an austere submanifold of $G/K$, 
\item[(ii)] $\pi^{-1}(N)$ is an austere submanifold of $G$,
\item[(iii)] $\Phi_K^{-1}(N)$ is an austere PF submanifold of $V_\mathfrak{g}$.
\end{enumerate}
\end{thm}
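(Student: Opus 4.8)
The plan is to reduce each of the three conditions to one and the same arithmetic statement about $A^N_\xi$: writing $m(\lambda):=\dim S_\lambda$ for the multiplicity of the eigenvalue $\lambda$, austerity of $N$ is exactly the assertion that $m(\lambda)=m(-\lambda)$ for every $\lambda$ and every $\xi\in T^\perp_{eK}N$. Arguing as in the reduction preceding Theorem \ref{pc1}, it suffices to fix $\xi\in T^\perp_{eK}N$ and compare the three spectra \eqref{eigen4.1}, \eqref{eigen4.2} and $\{\lambda\}$ at the respective base points, using the remark recorded just above the theorem that for each fixed $\lambda$ the eigenvalues $\lambda$, $\kappa_\pm(\nu,\lambda)$ and $\mu(\nu,\lambda,m)$ all occur with the common multiplicity $m(\lambda)$, where $\nu:=\|\xi\|/r$. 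In every case austerity is the statement that the relevant multiset of eigenvalues is invariant under multiplication by $-1$, so the proof consists of tracking how $-1$ permutes the distinguished families.

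For (i)$\Leftrightarrow$(ii) I would first record the elementary identities
\[
-\kappa_+(\nu,\lambda)=\kappa_-(\nu,-\lambda),\qquad -\kappa_-(\nu,\lambda)=\kappa_+(\nu,-\lambda),
\]
which are immediate from the definitions of $\kappa_\pm$. Because $\sqrt{\lambda^2+\nu^2}>|\lambda|$ we have $\kappa_+(\nu,\lambda)>0>\kappa_-(\nu,\lambda)$, so in \eqref{eigen4.1} the positive eigenvalues are precisely $\{\kappa_+(\nu,\lambda)\}_\lambda$ and the negative ones precisely $\{\kappa_-(\nu,\lambda)\}_\lambda$, each value carrying multiplicity $m(\lambda)$. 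The map $\lambda\mapsto\kappa_+(\nu,\lambda)$ is strictly increasing, hence injective, and austerity of $\pi^{-1}(N)$ says that the positive part of the spectrum, counted with multiplicity, agrees with the negation of the negative part, i.e. $\{(\kappa_+(\nu,\lambda),m(\lambda))\}_\lambda=\{(\kappa_+(\nu,-\lambda),m(\lambda))\}_\lambda$. By injectivity this holds if and only if $m(\lambda)=m(-\lambda)$ for all $\lambda$, which is (i).

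For (i)$\Leftrightarrow$(iii) I would split the spectrum \eqref{eigen4.2} into its three families. The eigenvalue $0$ is fixed by negation, and the family $\{\nu/(n\pi)\}_{n\in\mathbb{Z}\setminus\{0\}}$, all of whose values share the multiplicity $m(\nu,\perp)$, is negation-invariant through $n\leftrightarrow -n$ independently of $N$. For the remaining family the key identity is $-\mu(\nu,\lambda,m)=\mu(\nu,-\lambda,-m)$ for $\lambda\neq 0$ (with the separate symmetry $-\mu(\nu,0,m)=\mu(\nu,0,-m-1)$ at $\lambda=0$), both coming from the oddness of $\arctan$. To turn this into a multiplicity statement I would show that the families $\{\mu(\nu,\lambda,m)\}_{m\in\mathbb{Z}}$ attached to distinct $\lambda$ are pairwise disjoint: an equality $\mu(\nu,\lambda,m)=\mu(\nu,\lambda',m')$ forces $\arctan(\nu/\lambda)-\arctan(\nu/\lambda')=(m'-m)\pi$, and since each $\arctan(\nu/\cdot)\in(-\pi/2,\pi/2]$ the left-hand side lies in $(-\pi,\pi)$, so it must vanish, giving $\lambda=\lambda'$; the $\mu$-values are moreover disjoint from $0$ and from the $\nu/(n\pi)$ for the same reason. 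Consequently each $\mu(\nu,\lambda,m)$ occurs in \eqref{eigen4.2} with a well-defined multiplicity $m(\lambda)$, negation sends it to $\mu(\nu,-\lambda,-m)$ of multiplicity $m(-\lambda)$, and so \eqref{eigen4.2} is negation-invariant exactly when $m(\lambda)=m(-\lambda)$ for all $\lambda$, again (i).

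The routine part is the verification of the algebraic identities for $\kappa_\pm$ and $\mu$; the genuine content, and the step I expect to require the most care, is the disjointness of the $\mu$-families for distinct $\lambda$ together with the correct handling of the convention $\arctan(\nu/0):=\pi/2$ at $\lambda=0$, since these are precisely what make the multiplicities readable off \eqref{eigen4.2} and matchable under negation. Once disjointness is in hand, all three equivalences collapse to the single condition $m(\lambda)=m(-\lambda)$.
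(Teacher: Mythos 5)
Your proposal is correct and follows essentially the same route as the paper's proof: the same reduction via homogeneity to a fixed normal vector $\xi$ at the base point, the identities $-\kappa_{\pm}(\nu,\lambda)=\kappa_{\mp}(\nu,-\lambda)$ combined with the monotonicity of $\lambda\mapsto\kappa_{+}(\nu,\lambda)$ for the equivalence with (ii), and the identity $-\mu(\nu,\lambda,m)=\mu(\nu,-\lambda,-m)$ together with the $\arctan$-range argument for the equivalence with (iii). If anything, you are slightly more careful than the paper on two points it leaves implicit: the pairwise disjointness of the families $\{\mu(\nu,\lambda,m)\}_{m}$ for distinct $\lambda$ (and from $0$ and $\nu/(n\pi)$), which is what legitimizes reading off multiplicities in the step from (iii) to (i), and the shifted symmetry $-\mu(\nu,0,m)=\mu(\nu,0,-m-1)$ forced by the convention $\arctan(\nu/0):=\pi/2$, where the paper's stated identity needs this small correction.
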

\begin{proof}
``(i) $\Rightarrow$ (ii)'': Take $a \in \pi^{-1}(N)$ and $ w \in T_a^\perp \pi^{-1}(N)$. Set $\eta:= d \pi (w) \in T^\perp_{aK} N$, $N' := L_a^{-1}(N)$ and $\xi := d L_a^{-1}(\eta) \in T^\perp_{eK}N'$. Denote by $v \in T_e^\perp \pi^{-1}(N')$ the horizontal lift of $\xi$. By commutativity of (\ref{commute1}) (ii), we have $l_a (\pi^{-1}(N')) = \pi^{-1}(N)$ and $d l_a (v) = w$. Thus in order to show the austerity of $A^{\pi^{-1}(N)}_w$ it suffices to show that of $A^{\pi^{-1}(N')}_{v}$. For each eigenvalue $\lambda$ of $A^{N'}_\xi$ it follows from the austerity of $A^{N'}_\xi$ that $-\lambda$ is also an eigenvalue of $A^{N'}_\xi$ and 
\begin{align*}
&(-1) \times \kappa_+(\|\xi\|/r, \lambda)
=
\kappa_-(\|\xi\|/r, - \lambda),
\\
&(-1) \times \kappa_-(\|\xi\|/r, \lambda)
=
\kappa_+(\|\xi\|/r, - \lambda).
\end{align*}
Note that these identities still hold even if the multiplicities are taking account of. This shows that the set \eqref{eigen4.1} with multiplicities is invariant under the multiplication by $(-1)$ and (ii) follows.

``(ii) $\Rightarrow$ (i)'': Take $aK \in N$ and $\eta \in T_{aK}^\perp N$. Denote by $w \in T^\perp_a \pi^{-1}(N)$ the horizontal lift of $\eta$. Defining $N'$, $\xi$, $v$ by the above way it suffices to show the austerity of $A^{N'}_\xi$. Let $\lambda$ be an eigenvalue of $A^{N'}_\xi$. Since the set \eqref{eigen4.1} is invariant under the multiplication by $(-1)$ there exist eigenvalues $\lambda'$ and $\lambda''$ of $A^{N'}_\xi$ such that 
\begin{align*}
&
(-1) \times \kappa_+(\|\xi\|/r, \lambda)
=
\kappa_-(\|\xi\|/r, \lambda'),
\\
&
(-1) \times \kappa_-(\|\xi\|/r, \lambda)
=
\kappa_+(\|\xi\|/r, \lambda'').
\end{align*}
Note that the function
$\mathbb{R} \rightarrow \mathbb{R}_{>0}$, $x \mapsto \kappa_+(\|\xi\|/r, x)$ 
is monotonically increasing. Thus the relation 
\begin{equation*}
\kappa_-(\|\xi\|/r, x) 
=
- \kappa_+ (\|\xi\|/r, - x)
\end{equation*}
shows that also the function
$\mathbb{R} \rightarrow \mathbb{R}_{<0}$, $x \mapsto \kappa_-(\|\xi\|/r, x)$ is  monotonically increasing. From these we obtain $\lambda' = \lambda'' = - \lambda$. Note that this identity still holds even if the multiplicities are taken account of. This shows (i).

``(i) $\Rightarrow$ (iii)'': Take $u \in \Phi^{-1}_K(N)$ and  $X \in T_u^\perp \Phi_K^{-1}(N)$. Also take $g \in P(G, G \times \{e\})$ so that $u = g * \hat{0}$. Set $a := g(0) = \Phi(u)$, $\eta:= d \Phi_K (X) \in T^\perp_{aK} N$, $N' := L_a^{-1}(N)$ and $\xi := d L_a^{-1}(\eta) \in T^\perp_{eK}N'$. Denote by $\hat{\xi} \in T_{\hat{0}}^\perp \Phi_K^{-1}(N')$ the horizontal lift of $\xi$. By  commutativity of (\ref{commute3}) we have $g* (\Phi_K^{-1}(N')) = \Phi_K^{-1}(N)$ and $d g* ({\hat{\xi}}) = X$. Thus in order to show the austerity of $A^{\Phi_K^{-1}(N)}_X$ it suffices to show that of $A^{\Phi_K^{-1}(N')}_{{\hat{\xi}}}$. For each $\lambda$ it follows from the austerity of $A^{N'}_\xi$ that $- \lambda$ is also an eigenvalue of $A^{N'}_\xi$ and 
\begin{equation*}
(-1) \times \mu(\|\xi\|/r, \lambda, m)
=
\mu(\|\xi\|/r, -\lambda, -m).
\end{equation*}
Note that this identity still hold even if the multiplicities are taken account of.
This shows that the set 
\begin{equation*}
\left\{
\mu(\|\xi\|/r, \lambda, m)
\right\}_{\lambda, \ m \in \mathbb{Z}}
\end{equation*}
with multiplicities is invariant under the multiplication by $(-1)$. This together with \eqref{eigen4.2} shows the austerity of $A^{\Phi_K^{-1}(N')}_{\hat{\xi}}$ and (iii) follows.

``(iii) $\Rightarrow$ (i)'': Take $aK \in N$ and  $\eta \in T_{aK}^\perp N$. Choose $u \in \Phi_K^{-1}(a)$. Denote by $X \in T^\perp_u \Phi_K^{-1}(N)$ the horizontal lift of $\eta$. Defining $N'$, $\xi$, ${\hat{\xi}}$ by the above way it suffices to show the austerity of $A^{N'}_\xi$. From \eqref{eigen4.2} and the assumption the set 
\begin{equation*}
\{\mu(\|\xi\|/r, \lambda, m)\}_{\lambda, \ m \in \mathbb{Z}}
\end{equation*}
with multiplicities is invariant under the multiplication by $(-1)$. Thus for each eigenvalue $\lambda$ of $A^{N'}_\xi$ and each $m \in \mathbb{Z}$ there exists an eigenvalue $\lambda'$ of $A^{N'}_\xi$ and $m' \in \mathbb{Z}$ such that 
\begin{equation*}
(-1) \times \mu(\|\xi\|/r, \lambda, m)
=
\mu(\|\xi\|/r, \lambda', m'). 
\end{equation*}
That is, 
\begin{equation*}
-\arctan \frac{\|\xi\|}{r \lambda} - m \pi
=
\arctan \frac{\|\xi\|}{ r \lambda'} + m' \pi.
\end{equation*}
Since $- \pi/2 <\arctan x < \pi/2$, the above equality shows $m' = -m$ and 
\begin{equation*}
\lambda' = - \lambda.
\end{equation*}
Note that this identity still holds even if  the multiplicities are taking account of.  This shows (i).  
\end{proof}

\begin{example}
Ikawa, Sakai and Tasaki (\cite[Theorem 5.1]{IST09}) classified  austere submanifolds of the standard sphere given as orbits of $s$-representations of irreducible Riemannian symmetric pairs. Applying Theorem \ref{sphere2} to their result we obtain austere PF submanifolds as follows. Let $(U, L)$ be a compact Riemannian symmetric pair, where $L$ is connected. Denote by $\mathfrak{u} = \mathfrak{l} \oplus \mathfrak{p}$ the canonical decomposition and by $\operatorname{Ad}: L \rightarrow SO(\mathfrak{p})$ the isotropy representation. If an orbit $\operatorname{Ad}(L) \cdot x$ through $x \in \mathfrak{p}$ is an austere submanifold of the hypersphere $S(\|x\|)$ in $\mathfrak{p}$ then the orbit $P(SO(\mathfrak{p}), \operatorname{Ad}(L) \times SO(\mathfrak{p})_x) * \hat{0}$ is an austere PF submanifold of the Hilbert space $V_{ \mathfrak{o}(\mathfrak{p})}$. 
\end{example}

\section{The arid property}
In this section we study the arid property of PF submanifolds obtained through the parallel transport map $\Phi_K$. The main theorem is the following:
\begin{thm}\label{thm4}
Let $G$ be a connected compact semisimple Lie group equipped with a bi-invariant Riemannian metric induced from a negative multiple of the Killing form and $K$ a symmetric subgroup of $G$ such that the pair $(G,K)$ effective. If $N$ is an arid submanifold of the symmetric space $G/K$ then
\begin{enumerate}
\item[(i)] $\pi^{-1}(N)$ is an arid submanifold of $G$, and
\item[(ii)] $\Phi_K^{-1}(N)$ is an arid PF submanifold of $V_\mathfrak{g}$.
\end{enumerate}
\end{thm}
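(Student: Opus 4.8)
The plan is to follow verbatim the template used for the weakly reflective case in \cite{M19}, the only change being that the global isometry provided by the aridity of $N$ satisfies $d\varphi(\eta)\neq\eta$ instead of $d\varphi(\eta)=-\eta$; this sign plays no role in the argument. Everything rests on a single lifting lemma, which I isolate first. \emph{Claim:} every isometry $\varphi$ of $G/K$ fixing $eK$ is covered by an isometric automorphism $\sigma$ of $G$, in the sense that $\sigma(e)=e$, $\sigma(K)=K$ and $\pi\circ\sigma=\varphi\circ\pi$; moreover the induced linear map $\Sigma(u):=d\sigma\circ u$ is an isometry of $V_\mathfrak{g}$ fixing $\hat 0$ with $\Phi_K\circ\Sigma=\varphi\circ\Phi_K$. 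Granting this, both parts are formal consequences of the homogeneity built into the commutative diagrams \eqref{commute1} and \eqref{commute3}, exactly as in the weakly reflective setting.

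To prove the claim I would argue as follows. Since $G/K$ is symmetric, the differential $A:=d\varphi_{eK}$ is a linear isometry of $\mathfrak{m}$ preserving the curvature tensor $R_\xi=-\operatorname{ad}(\xi)^2$, equivalently $A$ is an automorphism of the Lie triple system $\mathfrak{m}$, i.e. $A[[x,y],z]=[[Ax,Ay],Az]$ for $x,y,z\in\mathfrak{m}$. Here the two standing hypotheses enter. Because $(G,K)$ is effective and $G$ is semisimple, one has $\mathfrak{k}=[\mathfrak{m},\mathfrak{m}]$, so $A$ extends \emph{uniquely} to a Lie algebra automorphism $\tilde A\in\operatorname{Aut}(\mathfrak{g})$ preserving the splitting $\mathfrak{g}=\mathfrak{k}+\mathfrak{m}$, via $\tilde A[x,y]:=[Ax,Ay]$ on $\mathfrak{k}$. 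Because the metric is induced from a negative multiple of the Killing form, which is invariant under all of $\operatorname{Aut}(\mathfrak{g})$, the automorphism $\tilde A$ is orthogonal and hence integrates to an isometric automorphism $\sigma$ of $G$ with $d\sigma=\tilde A$ and $\sigma(K)=K$. The induced isometry $\bar\sigma$ of $G/K$ fixes $eK$ and has differential $A=d\varphi_{eK}$, so by the rigidity of isometries of a symmetric space (an isometry fixing a point is determined by its differential there) we get $\bar\sigma=\varphi$, i.e. $\pi\circ\sigma=\varphi\circ\pi$. Finally a short computation with the defining equation of $E_u$ gives the equivariance $\Phi(d\sigma\circ u)=\sigma(\Phi(u))$, the automorphism analogue of \eqref{equiv4}; composing with $\pi$ yields $\Phi_K\circ\Sigma=\varphi\circ\Phi_K$, and $\Sigma(\hat 0)=\hat 0$ is clear.

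Now I would complete the two parts. For (i), given $a\in\pi^{-1}(N)$ and $w\in T^\perp_a\pi^{-1}(N)\setminus\{0\}$, I translate by $l_{a^{-1}}$: by \eqref{commute1}(ii) this carries $\pi^{-1}(N)$ to $\pi^{-1}(N'')$ with $N'':=L_{a^{-1}}(N)\ni eK$ arid, and $w$ to a horizontal $w'\in T^\perp_e\pi^{-1}(N'')$ with $\eta:=d\pi(w')\in T^\perp_{eK}N''\setminus\{0\}$. Aridity of $N''$ gives $\varphi$ fixing $eK$ with $d\varphi(\eta)\neq\eta$ and $\varphi(N'')=N''$; lifting to $\sigma$, the isometry $l_a\circ\sigma\circ l_{a^{-1}}$ of $G$ fixes $a$, preserves $\pi^{-1}(N)=\pi^{-1}(\varphi(N''))$ translated back, and moves $w$, since $d\pi(d\sigma(w'))=d\varphi(\eta)\neq\eta=d\pi(w')$ forces $d\sigma(w')\neq w'$. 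For (ii), given $u\in\Phi_K^{-1}(N)$ and $X\in T^\perp_u\Phi_K^{-1}(N)\setminus\{0\}$, I choose $g\in P(G,G\times\{e\})$ with $g*\hat 0=u$ (simple transitivity), so $a:=g(0)=\Phi(u)$; by \eqref{commute3}, $g*$ covers $L_a$ and reduces the problem to $\hat 0$ and to $\xi:=dL_{a^{-1}}(\eta)\in T^\perp_{eK}N''$ with $\eta:=d\Phi_K(X)$. The lift $\Sigma$ of the corresponding $\varphi$ then gives the isometry $(g*)\circ\Sigma\circ(g*)^{-1}$ of $V_\mathfrak{g}$ fixing $u$ and preserving $\Phi_K^{-1}(N)$; and since $\Sigma$ is linear it sends the horizontal lift $\hat\xi$ of $\xi$ to the constant path $\widehat{d\varphi(\xi)}\neq\hat\xi$, which after applying $d(g*)$ yields $d\Psi(X)\neq X$.

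The main obstacle is entirely concentrated in the lifting lemma, and within it in two points: the extension of the Lie-triple automorphism $A$ of $\mathfrak{m}$ to $\tilde A\in\operatorname{Aut}(\mathfrak{g})$, which is exactly where effectiveness and semisimplicity are needed through $\mathfrak{k}=[\mathfrak{m},\mathfrak{m}]$; and the passage from the infinitesimal automorphism $\tilde A$ to a genuine isometric automorphism $\sigma$ of the group $G$ descending to $\varphi$ on $G/K$, where the Killing-form normalization of the metric (so that $\operatorname{Aut}(\mathfrak{g})$ acts by isometries) and the rigidity of isometries of symmetric spaces are the essential inputs. Once the lemma is in place, the verification of the three defining conditions of aridity is routine and differs from \cite{M19} only in replacing $d\varphi(\eta)=-\eta$ by the weaker $d\varphi(\eta)\neq\eta$.
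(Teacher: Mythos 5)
Your high-level plan is the same as the paper's (reduction via \eqref{commute1} and \eqref{commute3} to the base points $e$ and $\hat 0$, a lift of the isometry of $G/K$ to an isometric automorphism of $G$, the induced linear isometry $u\mapsto d\sigma\circ u$ of $V_\mathfrak{g}$, and Lemma \ref{weaksubm}), but your proof of the one nontrivial ingredient, the lifting lemma, has a genuine gap at its central step. Having extended $A=d\varphi_{eK}$ to $\tilde A\in\operatorname{Aut}(\mathfrak{g})$ (this part is sound: effectiveness plus semisimplicity do give $\mathfrak{k}=[\mathfrak{m},\mathfrak{m}]$), you assert that $\tilde A$ ``is orthogonal and hence integrates to an isometric automorphism $\sigma$ of $G$''. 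That inference is invalid: \emph{every} automorphism of $\mathfrak{g}$ is orthogonal for the Killing form, but an automorphism of $\mathfrak{g}$ integrates to the possibly non-simply-connected group $G$ only if the corresponding automorphism of the universal cover $\tilde G$ preserves $\ker(\tilde G\to G)\subset Z(\tilde G)$, which is a real obstruction. It can fail for groups allowed by the theorem: the triality automorphism of $\mathfrak{o}(8)$ preserves the Killing form yet does not integrate to $SO(8)$, because it permutes the three order-two subgroups of $Z(\operatorname{Spin}(8))\cong\mathbb{Z}_2\times\mathbb{Z}_2$ and hence moves $\ker(\operatorname{Spin}(8)\to SO(8))$; and $SO(8)$ occurs in the effective symmetric pair $(SO(8),SO(7))$. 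So orthogonality cannot be the reason, and your argument supplies no other one. A secondary unproved point: even granted $\sigma$ with $d\sigma=\tilde A$, you need $\sigma(K)=K$ to descend to $G/K$, but $d\sigma(\mathfrak{k})=\mathfrak{k}$ only yields $\sigma(K_0)=K_0$, and $K$ may be disconnected since only $G_0^\theta\subset K\subset G^\theta$ is assumed.

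The fact you need is true under the hypotheses, but the way to get it is the paper's, which works entirely on the group level and never integrates anything: by \cite[Theorem 4.1 in Chapter V]{Hel01}, effectiveness and semisimplicity imply that $L:G\to I(G/K)$, $b\mapsto L_b$, is an isomorphism onto the identity component $I_0(G/K)$; since $I_0(G/K)$ is normal in $I(G/K)$, conjugation by the isometry $\varphi_\xi$ defines an automorphism $\varphi_v$ of $G$ by $L_{\varphi_v(b)}:=\varphi_\xi\circ L_b\circ\varphi_\xi^{-1}$, which is an isometry of $G$ because the metric is induced from the Killing form, and which satisfies $\pi\circ\varphi_v=\varphi_\xi\circ\pi$ by a one-line computation using $\varphi_\xi(eK)=eK$. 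Its differential is precisely your $\tilde A$ (by uniqueness of the extension), so this construction in effect proves your integrability claim, but only by this group-level route; once you substitute it for the integration step, your argument closes up and coincides with the paper's proof.
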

From this theorem we obtain the following corollary:
\begin{cor} Let $M$ be an irreducible Riemannian symmetric space of compact type. Denote by $G$ the identity component of the group of isometries of $M$ and by $K$ the isotropy subgroup of $G$ at a fixed $p \in M$. If $N$ is an arid submanifold of $M  =G/K$ then $\Phi_K^{-1}(N)$ is an arid PF submanifold of $V_\mathfrak{g}$.
\end{cor}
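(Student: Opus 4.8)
The plan is to deduce the corollary directly from Theorem \ref{thm4}(ii): I would equip $G$ with the bi-invariant metric coming from a negative multiple of the Killing form, verify that the resulting data $(G,K)$ satisfies all of the hypotheses of that theorem, and then check that the arid hypothesis on $N$ is unaffected by the attendant change of metric.

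First I would invoke the standard structure theory of compact irreducible symmetric spaces. Since $M$ is an irreducible Riemannian symmetric space of compact type, the identity component $G$ of its isometry group is a connected compact \emph{semisimple} Lie group, and the geodesic symmetry $s_p$ at $p$ induces by conjugation an involutive automorphism $\theta$ of $G$ realizing $K$ as a \emph{symmetric} subgroup, with $G^\theta_0 \subset K \subset G^\theta$. Because $G$ is the identity component of the full isometry group acting on $M$, its action is effective, so the pair $(G,K)$ is effective. These are precisely the conditions imposed on $(G,K)$ in Theorem \ref{thm4}.

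Second, I would address the metric. Theorem \ref{thm4} demands the bi-invariant metric on $G$ induced from $-c\,B$, a negative multiple of the Killing form $B$, which in turn induces a normal homogeneous metric on $G/K$ by restricting $-c\,B$ to $\mathfrak{m}$. This normal homogeneous metric need not coincide on the nose with the intrinsic symmetric metric of $M$. However, since $M$ is \emph{irreducible}, the isotropy representation of $K$ on $\mathfrak{m}=T_pM$ is irreducible, so by Schur's lemma any two $\operatorname{Ad}(K)$-invariant inner products on $\mathfrak{m}$ are proportional; hence the normal homogeneous metric and the given symmetric metric are homothetic. Since an isometry of $(M,g)$ is automatically an isometry of $(M,c\,g)$ for any $c>0$, the arid property of $N$ is invariant under this homothety, and $N$ remains arid as a submanifold of $G/K$ carrying the normal homogeneous metric required by Theorem \ref{thm4}. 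The same homothety invariance persists on the ambient Hilbert space $V_\mathfrak{g}$, whose inner product scales with that of $\mathfrak{g}$.

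With every hypothesis verified, applying Theorem \ref{thm4}(ii) yields that $\Phi_K^{-1}(N)$ is an arid PF submanifold of $V_\mathfrak{g}$, which is the assertion of the corollary. The only genuine obstacle I anticipate is the metric-matching step: one must confirm that the normalization demanded by Theorem \ref{thm4} (negative Killing form) differs from the symmetric metric of $M$ only by a global scale, and it is exactly here that the irreducibility of $M$ — through the irreducibility of the isotropy representation together with Schur's lemma — is indispensable.
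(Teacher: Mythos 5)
Your proposal is correct and follows essentially the same route as the paper: the paper states this corollary as an immediate consequence of Theorem \ref{thm4}, leaving implicit exactly the verifications you spell out (that $G=I_0(M)$ is compact semisimple, that $K$ is a symmetric subgroup via the geodesic symmetry with $(G,K)$ effective, and that by irreducibility and Schur's lemma the symmetric metric of $M$ agrees, up to the scale freedom already allowed in the phrase ``negative multiple of the Killing form,'' with the required normal homogeneous metric). Your homothety-invariance observation for the arid property on both $G/K$ and $V_\mathfrak{g}$ is a sound way to close that last gap; one could equally just choose the multiple $c$ so the two metrics coincide exactly.
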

To prove Theorem \ref{thm4} we need the following lemma:
\begin{lem} \label{weaksubm}
Let $\mathcal{M}$ and $\mathcal{B}$ be Riemannian Hilbert manifolds, $\phi: \mathcal{M} \rightarrow \mathcal{B}$ a Riemannian submersion and $N$ a closed submanifold of $\mathcal{B}$. Fix $p \in \phi^{-1}(N)$ and $X \in (T^\perp_p \phi^{-1}(N)) \backslash \{0\}$. Suppose that $\varphi_{\mathcal{M}}$ is an isometry of $\mathcal{M}$ fixing $p$, that $\varphi_{\mathcal{B}}$ is an isometry of $\mathcal{B}$ fixing $\phi(p)$ and that the diagram
\begin{equation*}
\begin{CD}
\mathcal{M} @>\varphi_{\mathcal{M}}>> \mathcal{M}
\\
@V\phi VV @V\phi VV
\\
\mathcal{B} @>\varphi_{\mathcal{B}}>> \mathcal{B}
\end{CD}
\end{equation*}
commutes. Then the following are equivalent:
\begin{enumerate}
\item[(i)] $\varphi_\mathcal{M}$ satisfies $\varphi_\mathcal{M}(\phi^{-1}(N)) = \phi^{-1}(N)$ and $d\varphi_{\mathcal{M}}(X) \neq X$.
\item[(ii)] $\varphi_\mathcal{B}$ satisfies $\varphi_\mathcal{B}(N) = N$ and $d \varphi_{\mathcal{B}}(d \phi(X)) \neq d \phi (X)$. 
\end{enumerate} 
\end{lem}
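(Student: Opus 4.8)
The plan is to verify the two clauses of (i) and (ii) separately, matching the invariance condition $\varphi_{\mathcal{M}}(\phi^{-1}(N))=\phi^{-1}(N)$ with $\varphi_{\mathcal{B}}(N)=N$, and the non-fixing condition $d\varphi_{\mathcal{M}}(X)\neq X$ with $d\varphi_{\mathcal{B}}(d\phi(X))\neq d\phi(X)$. The whole argument rests on the single observation that, because the square commutes, $\varphi_{\mathcal{M}}$ carries fibers of $\phi$ to fibers of $\phi$, together with the fact that $X$ is necessarily horizontal.

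First I would establish the invariance equivalence. Applying $\phi\circ\varphi_{\mathcal{M}}^{-1}=\varphi_{\mathcal{B}}^{-1}\circ\phi$ pointwise, a point $q$ lies in $\varphi_{\mathcal{M}}(\phi^{-1}(N))$ if and only if $\phi(\varphi_{\mathcal{M}}^{-1}(q))=\varphi_{\mathcal{B}}^{-1}(\phi(q))\in N$, that is, if and only if $\phi(q)\in\varphi_{\mathcal{B}}(N)$. Hence $\varphi_{\mathcal{M}}(\phi^{-1}(N))=\phi^{-1}(\varphi_{\mathcal{B}}(N))$ as sets, with no hypothesis on $N$ being preserved. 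Since $\phi$ is surjective, $\phi^{-1}$ is injective on subsets of $\mathcal{B}$, so this set equality yields $\varphi_{\mathcal{M}}(\phi^{-1}(N))=\phi^{-1}(N)\Longleftrightarrow\varphi_{\mathcal{B}}(N)=N$.

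For the derivative clause, the key point is that $X$ is horizontal. Indeed, since $\phi(p)\in N$ the whole fiber $\phi^{-1}(\phi(p))$ is contained in $\phi^{-1}(N)$, so the vertical space $\mathcal{V}_p=T_p\phi^{-1}(\phi(p))$ lies in $T_p\phi^{-1}(N)$; taking orthogonal complements gives $X\in T^\perp_p\phi^{-1}(N)\subset\mathcal{V}_p^\perp=\mathcal{H}_p$. Because $\varphi_{\mathcal{M}}$ is an isometry fixing $p$ and preserving the fiber through $p$ (as $\varphi_{\mathcal{B}}$ fixes $\phi(p)$), $d\varphi_{\mathcal{M}}$ preserves $\mathcal{V}_p$ and hence the horizontal space $\mathcal{H}_p$; thus both $X$ and $d\varphi_{\mathcal{M}}(X)$ lie in $\mathcal{H}_p$, on which $d\phi_p$ is a linear isometry, in particular injective. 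Differentiating the commuting square at $p$ gives $d\phi_p\circ d\varphi_{\mathcal{M}}=d\varphi_{\mathcal{B}}\circ d\phi_p$, so
\begin{gather*}
d\varphi_{\mathcal{M}}(X)=X
\Longleftrightarrow
d\phi_p\bigl(d\varphi_{\mathcal{M}}(X)\bigr)=d\phi_p(X)
\\
\Longleftrightarrow
d\varphi_{\mathcal{B}}\bigl(d\phi(X)\bigr)=d\phi(X),
\end{gather*}
where the first equivalence uses injectivity of $d\phi_p$ on $\mathcal{H}_p$. Negating gives the desired equivalence of the non-fixing conditions, and combined with the previous paragraph this proves (i) $\Longleftrightarrow$ (ii).

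I expect the only genuinely substantive steps to be the verification that $X$ is horizontal and that $d\varphi_{\mathcal{M}}$ preserves the horizontal distribution; once these are in place the argument reduces to purely formal manipulation of the commuting square and the isometry property of $d\phi_p|_{\mathcal{H}_p}$.
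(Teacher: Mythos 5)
Your proof is correct and follows essentially the same route as the paper: the set-invariance equivalence from the commuting square, and the derivative equivalence from the induced commuting square on normal/horizontal spaces at $p$, where $d\phi_p$ is injective. You merely fill in details the paper leaves implicit (that $T^\perp_p\phi^{-1}(N)$ is horizontal, that $d\varphi_{\mathcal{M}}$ preserves the vertical and hence horizontal spaces), which is a welcome elaboration but not a different argument.
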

\begin{proof}
It is easily seen that the condition $\varphi_\mathcal{M}(\phi^{-1}(N)) = \phi^{-1}(N)$ is equivalent to the condition $\varphi_{\mathcal{B}}(N) = N$. Then by commutativity of the diagram
\begin{equation*}
\begin{CD}
T^\perp_p \phi^{-1} (N) @>d\varphi_{\mathcal{M}}>> T^\perp_p \phi^{-1} (N)
\\
@V d\phi VV @V d\phi VV
\\
\ T^\perp_{\phi(p)} N\  @>d\varphi_{\mathcal{B}}>>\  T^\perp_{\phi(p)} N,
\end{CD}
\end{equation*}
the condition $d\varphi_{\mathcal{M}}(X) \neq X$ is equivalent to the condition $d \varphi_{\mathcal{B}}(d \phi(X)) \neq d \phi (X)$. This proves the lemma.
\end{proof}

\begin{proof}[Proof of Theorem $\ref{thm4}$]
(i) Take $a \in \pi^{-1}(N)$ and $w \in (T^\perp_a \pi^{-1}(N)) \backslash \{0\}$. Set $\eta := d \pi(w) \in T^\perp_{aK}N$, $N' := L_a^{-1}(N)$ and $\xi := dL_a^{-1}(\eta) \in (T^\perp_{eK} N') \backslash \{0\}$. Denote by $v \in (T_e^\perp \pi^{-1}(N')) \backslash \{0\}$ the horizontal lift of $\xi$. 
By commutativity of (\ref{commute1}) (ii)  we have $l_a (\pi^{-1}(N')) = \pi^{-1}(N)$ and $d l_a (v) = w$. Thus in order to show the existence of an isometry $\varphi_w$ with respect to $w$ it suffices to construct an isometry $\varphi_v$ with respect to $v$. Let $\varphi_{\eta}$ be an isometry with respect to $\eta$. Then an isometry $\varphi_{\xi}$ with respect to $\xi$ is defined  by
$
\varphi_{\xi} := L_a^{-1} \circ \varphi_{\eta} \circ L_a.
$
Now we define $\varphi_v$ as follows. Denote by $I(G/K)$  the group of isometries of $G/K$. 
From the assumption the map $L : G \rightarrow I(G/K)$, $a \mapsto L_a$ is a Lie group isomorphism onto the identity component $I_0(G/K)$ (\cite[Theorem 4.1 in Chapter V]{Hel01}). Since $I_0(G/K)$ is a normal subgroup of $I(G/K)$ an automorphism  $\varphi_v: G \rightarrow G$, $b \mapsto \varphi_v(b)$ is defined by
\begin{equation}\label{reflection3}
L_{\varphi_v(b)} := \varphi_{\xi} \circ L_b \circ \varphi_{\xi}^{-1}.
\end{equation}
Since the bi-invariant Riemannian metric on $G$ is induced from the Killing form of $\mathfrak{g}$ the automorphism $\varphi_v$ is an isometry of $G$. Moreover since
\begin{align*}
\varphi_\xi \circ \pi (b) 
&=
\varphi_\xi(bK)
=
\varphi_\xi \circ L_b (eK)
=
\varphi_\xi \circ L_b  \circ \varphi_\xi^{-1}(eK) \quad \text{and}
\\
\pi \circ \varphi_v(b)
&=
\varphi_v(b)K
=
L_{\varphi_v(b)}(eK)
=\varphi_\xi \circ L_b  \circ \varphi_\xi^{-1}(eK)
\end{align*}
hold for all $b \in G$ it follows from Lemma \ref{weaksubm} that $\varphi_v$ is an isometry with respect to $v$. This proves (i). 

(ii) Take $u \in \Phi_K^{-1}(N)$ and $X \in (T_u^\perp \Phi_K^{-1}(N)) \backslash \{0\}$. Also take $g \in P(G, G \times \{e\})$ so that $u = g * \hat{0}$. Set $a := \Phi(u) = g(0)$ and $\eta := d \Phi_K(X) \in (T^\perp_{aK} N) \backslash \{0\}$. Define $N'$, $\xi$, $v$ as in the above (i). Denote by $\hat{\xi} \in (T^\perp_{\hat{0}} \Phi_K^{-1}(N')) \backslash \{0\}$ the horizontal lift of $\xi$ with respect to the Riemannian submersion $\Phi_K: V_\mathfrak{g} \rightarrow G/K$. By commutativity of (\ref{commute3}) we have $ g*\Phi_K^{-1}(N') = \Phi^{-1}_{K}(N)$ and $d(g *)\hat{\xi} = X$. Thus in order to show the existence of an isometry $\varphi_X$ with respect to $X$ it suffices to construct an isometry $\varphi_{\hat{\xi}}$ with respect to $\hat{\xi}$. By the same way as in (i) we can define an isometry $\varphi_v$ with respect to $v \in (T^\perp_e \pi^{-1}(N')) \backslash\{0\}$. Moreover we define a linear orthogonal transformation $\varphi_{\hat{\xi}}$ of $V_\mathfrak{g}$ by 
\begin{equation} \label{reflection2}
\varphi_{\hat{\xi}} (u) := d \varphi_v \circ u, 
\quad 
u \in {V_\mathfrak{g}}.
\end{equation}
Since $\varphi_v$ is an automorphism of $G$ we have $\varphi_{\hat{\xi}}(g * \hat{0}) = (\varphi_v \circ g) * \hat{0}$ for all $g \in \mathcal{G}$. This together with \eqref{equiv4} implies that the following diagram commutes:
\begin{equation*}
\begin{CD}
V_\mathfrak{g} @>\varphi_{\hat{\xi}}>> V_\mathfrak{g}
\\
@V \Phi VV @V \Phi VV
\\
\ G \ @>\varphi_v >> \ G.
\end{CD}
\end{equation*}
Thus by Lemma \ref{weaksubm} $\varphi_{\hat{\xi}}$ is an isometry with respect to $\hat{\xi}$ and (ii) follows.\end{proof}

\begin{example}\label{example3}
Let $m,n \in \mathbb{Z}_{\geq 2}$. Set $\bar{M} : = S^{mn-1}(\sqrt{m}) \subset \mathbb{R}^{mn}$ and 
\begin{equation*}
M := 
\underbrace{S^{n-1} (1) \times \cdots \times S^{n-1}(1)}_{\text{$m$ times}}
\subset \bar{M}. 
\end{equation*}
Ikawa, Sakai and Tasaki \cite[Example 2.3]{IST09} showed that if $m = 2$ then $M$ is a weakly reflective submanifold of $\bar{M}$. Taketomi \cite[Proposition 3.1]{Tak18}  showed that if $m \geq 3$ then  $M$ is an arid submanifold of $\bar{M}$ and is not an austere submanifold (therefore not a weakly reflective submanifold) of $\bar{M}$. 

Set $G := SO(mn)$ and $K := SO(mn-1)$ so that $\bar{M} = G/K$. If $m = 2$ then $\Phi_K^{-1}(M)$ is a weakly reflective PF submanifold of $V_\mathfrak{g}$ by \cite[Theorem 8]{M19} and is not a totally geodesic PF submanifold of $V_\mathfrak{g}$ by \cite[Theorem 3]{M19}. If $m \geq 3$ then $\Phi_K^{-1}(M)$ is an arid PF submanifold of $V_\mathfrak{g}$ by Theorem \ref{thm4} and is not an austere PF submanifold (therefore not a weakly reflective PF submanifold) of $V_\mathfrak{g}$ by Theorem \ref{sphere2}. 
\end{example}

In general it is not clear that conversely the arid property of $\Phi_K^{-1}(N)$ implies the arid property of $N$ or not. 
However the next theorem shows that  under suitable assumptions the arid property of $\Phi_K^{-1}(N)$ is equivalent to that of $N$. To explain  this we now introduce some terminologies which are used in a context slightly wider than \cite{Tak18}. Let $M$ be a submanifold immersed in a finite dimensional Riemannian manifold $\bar{M}$. Denote by $I(\bar{M})$ the group of isometries of $\bar{M}$. For a closed subgroup $\mathfrak{G}$ of $I(\bar{M})$ we say that $M$ is \emph{$\mathfrak{G}$-arid} (resp.\ \emph{$\mathfrak{G}$-weakly reflective}) if for each $p \in M$ and each $\xi \in T^\perp_p M \backslash \{0\}$ there exists an isometry $\varphi_\xi$ (resp.\ reflection $\nu_\xi$) with respect to $\xi$ satisfying $\varphi_\xi \in \mathfrak{G}_p$, where $\mathfrak{G}_p$ denotes the isotropy subgroup of $\mathfrak{G}$ at $p$.  Clearly $\mathfrak{G}$-weakly reflective submanifols are $\mathfrak{G}$-arid submanifolds. If $\mathfrak{G} = I(\bar{M})$ then  ``$\mathfrak{G}$-arid'' (resp.\ ``$\mathfrak{G}$-weakly reflective'') is nothing but ``arid'' (resp.\ ``weakly reflective'').
The same concepts and relation are also valid for PF submanifolds in Hilbert spaces.

From now on, as in Section \ref{Sec2},  we denote by $G$ a connected compact Lie group equipped with a bi-invariant Riemannian  metric, $K$ a closed subgroup of $G$ and $G/K$ the compact normal homogeneous space. Recall that  the Hilbert Lie group $\mathcal{G} := H^1([0,1], G)$ acts on $V_\mathfrak{g}$ via the gauge transformations \eqref{gaugetransf}. We denote by $\mathcal{G}_u$ the isotropy subgroup of $\mathcal{G}$ at $u \in V_\mathfrak{g}$. If $u = \hat{0}$ then $\mathcal{G}_{\hat{0}}$ is the set of constant paths $\hat{G} := \{\hat{b} \in G \mid b \in G\}$. Thus if $u = g * \hat{0}$ for some $g \in \mathcal{G}$ then $\mathcal{G}_u = g \hat{G} g^{-1}$. 
Recall also that $G \times G$ acts on $G$ by the formula \eqref{Haction}. We denote by $(G \times G)_a = (a,e) \Delta G (a,e)^{-1}$ the isotropy subgroup of $G \times G$ at $a \in G$, where $\Delta  G := \{(b,b)\mid b\in G\}$. Finally we recall the $G$-action on $G/K$ defined by $b \cdot (aK) := (ba)K$ for $a, b \in G$. We denote by $G_{aK} = aKa^{-1}$ the isotropy subgroup of $G$ at $aK \in G/K$.
 
\begin{thm}\label{thm2}
\begin{enumerate}\ 
\item[(i)] Let $N$ be a closed submanifold of a compact Lie group $G$   equipped with a bi-invariant Riemannian metric. Then the following are equivalent: 
\begin{enumerate}
\item[(a)] $N$ is a $(G \times G)$-arid submanifold of $G$.
\item[(b)] $\Phi^{-1}(N)$ is a $\mathcal{G}$-arid PF submanifold of $V_\mathfrak{g}$.
\end{enumerate}
\item[(ii)] Let $N$ be a closed submanifold of a compact normal homogeneous space $G/K$. Then the  following are equivalent:
\begin{enumerate}
\item[(a)] $N$ is a $G$-arid submanifold of $G/K$. 
\item[(b)] $\pi^{-1}(N)$ is a $(G \times K)$-arid submanifold of $G$.
\item[(c)] $\Phi_K^{-1}(N)$ is a $P(G, G \times K)$-arid PF submanifold of $V_\mathfrak{g}$.
\end{enumerate}
\end{enumerate}
\end{thm}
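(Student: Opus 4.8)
The plan is to deduce all the equivalences from Lemma~\ref{weaksubm}, applied in turn to the three Riemannian submersions in play: $\pi\colon G\to G/K$, $\Phi\colon V_\mathfrak{g}\to G$ and $\Phi_K=\pi\circ\Phi\colon V_\mathfrak{g}\to G/K$. For each submersion $\phi\colon\mathcal M\to\mathcal B$ I need two facts. First, the isometry group acting upstairs covers the one acting downstairs through commuting squares: these are supplied by \eqref{equiv4} for $\Phi$, by \eqref{commute1}(ii) for $\pi$, and by \eqref{commute3} for $\Phi_K$. Second, since $\phi$ is a Riemannian submersion the normal space $T^\perp_p\phi^{-1}(N)$ is exactly the horizontal lift of $T^\perp_{\phi(p)}N$, so $d\phi$ restricts to a linear isometry carrying a nonzero normal vector $X$ to a nonzero normal vector $\eta:=d\phi(X)$ and conversely; this matches the nonvanishing clauses $X\neq0$ and $\eta\neq0$ in the two definitions of aridity and puts us exactly in the situation of Lemma~\ref{weaksubm}.

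The technical heart is the surjectivity of the endpoint homomorphism on isotropy subgroups, together with the fact that the resulting lift fixes the chosen point upstairs. Writing $u=g*\hat0$, so that $\mathcal G_u=g\hat Gg^{-1}$ and $a:=\Phi(u)=g(0)g(1)^{-1}$ by \eqref{equiv4}, I take an arbitrary $(b_1,b_2)\in(G\times G)_a$, that is $b_1ab_2^{-1}=a$, and lift it through
\begin{equation*}
\tilde g:=g\,\hat c\,g^{-1},\qquad c:=g(0)^{-1}b_1\,g(0).
\end{equation*}
Then $\tilde g(0)=b_1$, while $\tilde g(1)=g(1)g(0)^{-1}b_1g(0)g(1)^{-1}=a^{-1}b_1a=b_2$ using $a=g(0)g(1)^{-1}$ and $b_1ab_2^{-1}=a$; moreover $\tilde g*u=g*(\hat c*\hat0)=g*\hat0=u$ because $\hat c*\hat0=\hat0$. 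Hence the endpoint map restricts to a surjection $\mathcal G_u\twoheadrightarrow(G\times G)_a$, and since $b_2\in K$ forces $\tilde g(1)\in K$ it further restricts to $P(G,G\times K)_u\twoheadrightarrow(G\times K)_a$. I expect this compatibility of isotropy groups to be the main obstacle: everything else is formal, but one must be sure that a base isometry fixing $\Phi(u)$ lifts to a gauge transformation fixing $u$ and lying in the prescribed subgroup, which is precisely what this computation secures.

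With these ingredients the assembly is routine. For (i) I apply Lemma~\ref{weaksubm} to $\phi=\Phi$ with the groups $\mathcal G$ and $G\times G$. If $N$ is $(G\times G)$-arid, then given $u\in\Phi^{-1}(N)$ and $X\in(T^\perp_u\Phi^{-1}(N))\setminus\{0\}$ I pass to $\eta=d\Phi(X)\neq0$, choose an isometry $(b_1,b_2)\cdot$ in $(G\times G)_a$ with respect to $\eta$, lift it to $\tilde g\in\mathcal G_u$ as above, and conclude from the lemma that the gauge transformation $\tilde g*$ is an isometry with respect to $X$; the converse simply pushes a witnessing $g\in\mathcal G_u$ down to $(g(0),g(1))\in(G\times G)_a$.

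For (ii) I would split the chain as (a)$\Leftrightarrow$(b)$\Leftrightarrow$(c). The equivalence (a)$\Leftrightarrow$(b) is Lemma~\ref{weaksubm} for $\phi=\pi$: the $(G\times K)$-action $a\mapsto b_1ab_2^{-1}$ on $G$ descends through $\pi$ to the $G$-action $L_{b_1}$ on $G/K$ (right multiplication by $b_2\in K$ preserves the $\pi$-fibers), and on isotropy groups this gives the surjection $(G\times K)_a\to G_{aK}=aKa^{-1}$, $(b_1,b_2)\mapsto b_1=ab_2a^{-1}$. The equivalence (b)$\Leftrightarrow$(c) is the argument of (i) applied verbatim to the submanifold $\pi^{-1}(N)$ of $G$, with $\mathcal G$ and $G\times G$ replaced by the subgroups $P(G,G\times K)$ and $G\times K$; the only new input is the restricted surjectivity $P(G,G\times K)_u\twoheadrightarrow(G\times K)_a$ already established in the second paragraph. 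Since $\Phi_K^{-1}(N)=\Phi^{-1}(\pi^{-1}(N))$, this proves (c), and the theorem follows.
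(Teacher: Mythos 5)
Your proof is correct and follows essentially the same route as the paper: both rest on Lemma \ref{weaksubm} together with the correspondence of isotropy subgroups under the endpoint map $g \mapsto (g(0),g(1))$, and both settle (ii)(b)$\Leftrightarrow$(c) by rerunning the argument of (i) for $\pi^{-1}(N)$ with $\mathcal{G}$, $G\times G$ replaced by $P(G,G\times K)$, $G\times K$. The only cosmetic difference is that the paper first translates everything to the base points $\hat{0}$, $e$, $eK$ (so the isotropy lifts become constant paths $\hat{b}$, i.e.\ conjugations), whereas you build the lift $\tilde{g}=g\hat{c}g^{-1}\in\mathcal{G}_u=g\hat{G}g^{-1}$ directly at $u$; this is the same computation in different packaging.
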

\begin{proof}
(i) ``(a) $\Rightarrow$ (b)'': Take $u \in \Phi^{-1}(N)$ and $X \in (T_u^\perp \Phi^{-1}(N)) \backslash \{0\}$. Also take $g \in P(G, G \times \{e\})$ so that $u = g * \hat{0}$. Set $a := \Phi(u) = g(0)$, $\eta := d \Phi(X) \in (T_{a} ^\perp N) \backslash \{0\}$, $N' := a^{-1}N$ and $\xi := dL_a^{-1}( \eta) \in (T_e^\perp N') \backslash \{0\}$. Denote by $\hat{\xi} \in (T^\perp_{\hat{0}} \Phi^{-1}(N')) \backslash \{0\}$ the horizontal lift of $\xi$. By commutativity of (\ref{commute1}) (i) we have $g * (\Phi^{-1}(N')) = \Phi^{-1}(N)$ and $(d g*) \hat{\xi} = X$. Thus in order to show the existence of an isometry $\varphi_X$ with respect to $X$ satisfying $\varphi_X \in \mathcal{G}_u$ it suffices to construct an isometry $\varphi_{\hat{\xi}}$ with respect to $\hat{\xi}$ satisfying $\varphi_{\hat{\xi}} \in \mathcal{G}_{\hat{0}}$. Let $\varphi_{\eta}$ be an isometry with respect to $\eta$ which is given by $\varphi_\eta(c) = b' c b^{-1}$ for some $(b', b) \in (G \times G)_a$. Then an isometry $\varphi_{\xi}$ with respect to $\xi$ is defined  by 
$
\varphi_{\xi} := (a, e)^{-1} \circ \varphi_{\eta} \circ (a,e)
$,
that is, $\varphi_{\xi}(c) := b c b^{-1}$ for $c \in G$. Now we define a linear orthogonal transformation $\varphi_{\hat{\xi}}$ of $V_\mathfrak{g}$  by
\begin{equation*}
\varphi_{\hat{\xi}} (u) 
:= 
d \varphi_\xi \circ u
=
b u b^{-1}
=
\hat{b} * u,
\quad 
u \in {V_\mathfrak{g}}.
\end{equation*}
Clearly $\varphi_{\hat{\xi}} \in \mathcal{G}_{\hat{0}}$. Moreover by (\ref{equiv4}) the following diagram commutes:
\begin{equation*}
\begin{CD}
V_\mathfrak{g} @>\varphi_{\hat{\xi}}>> V_\mathfrak{g}
\\
@V \Phi VV @V \Phi VV
\\
\ G\  @>\varphi_\xi >> \ G.
\end{CD}
\end{equation*}
From Lemma \ref{weaksubm} $\varphi_{\hat{\xi}}$ is an isometry with respect to $\hat{\xi}$ and (b) follows.

(i) ``(b) $\Rightarrow$ (a)'': Take $a \in N$ and $\eta \in (T_a^\perp N)\backslash \{0\}$. Fix $u \in \Phi^{-1}(a)$. Denote by $X \in (T^\perp_u \Phi^{-1}(N)) \backslash \{0\}$ the horizontal lift of $\eta$. Take $g \in P(G, G \times \{e\})$ so that $g*\hat{0} = u$ and define  $N'$, $\xi$, $\hat{\xi}$ as in the above (i).  Let $\varphi_X$ be an isometry with respect to $X$ satisfying $\varphi_X \in \mathcal{G}_u$. Then an isometry $\varphi_{\hat{\xi}}$ with respect to $\hat{\xi} \in (T^\perp_{\hat{0}} \Phi^{-1}(N'))\backslash \{0\}$ is defined by $\varphi_{\hat{\xi}} := (g*)^{-1} \circ \varphi_X \circ (g*)$. By definition $\varphi_\xi \in \mathcal{G}_{\hat{0}}$ and thus there exists $b \in G$ such that $\varphi_{\hat{\xi}}(u) = b u b^{-1}$. Hence defining an isometry $\varphi_\xi$ of $G$ by $\varphi_\xi(c) :=bcb^{-1}$ for $c \in G$ it follows from Lemma \ref{weaksubm} that $\varphi_\xi$ is an isometry with respect to $\xi$ satisfying $\varphi_\xi \in (G \times G)_e$. Therefore an isometry $\varphi_\eta$  with respect to $\eta$ is defined by $\varphi_{\eta} := l_a \circ \varphi_\xi \circ l_a^{-1}$ so that $\varphi_\eta \in (G \times G)_a$. This shows (a).

(ii) ``(a) $\Rightarrow$ (b)'':  Take $a \in \pi^{-1}(N)$ and $w \in (T_a^\perp \pi^{-1}(N)) \backslash \{0\}$. Set $\eta:= d \pi (w) \in (T^\perp_{aK} N) \backslash \{0\}$, $N' := L_a^{-1}(N)$ and $\xi := d L_a^{-1}(\eta) \in (T^\perp_{eK}N') \backslash \{0\}$. Denote by $v \in (T_e^\perp \pi^{-1}(N')) \backslash \{0\}$ the horizontal lift of $\xi$. By commutativity of (\ref{commute1}) (ii), we have $l_a (\pi^{-1}(N')) = \pi^{-1}(N)$ and $d l_a (v) = w$. Thus in order to show the existence of an isometry $\varphi_w$ with respect to $w$ satisfying $\varphi_w \in (G \times K)_a = (a,e) \Delta K (a,e)^{-1}$ it suffices  to construct an isometry $\varphi_v$ with respect to $v$ satisfying $\varphi_v \in (G \times K)_e = \Delta K$. Let $\varphi_\eta$ be an isometry with respect to $\eta$ which is given by $\varphi_\eta(cK)= (bc)K$ for some $b \in G_{aK}$. Then there exists $k \in K$ such that $b = aka^{-1}$. Thus an isometry $\varphi_{\xi}$ with respect to $\xi$ is defined by $\varphi_\xi := L_a^{-1} \circ \varphi_\xi \circ L_a$, that is, $\varphi_\xi = L_k$. Define an isometry $\varphi_v$ of $G$ by
\begin{equation*}
\varphi_{v}(c) :=  k c k^{-1}
, \quad 
c \in G.
\end{equation*}
Clearly $\varphi_v \in (G \times K)_e$. Moreover the following diagram commutes:
\begin{equation*}
\begin{CD}
G @>\varphi_v>> G
\\
@V \pi VV @V \pi VV
\\
\ G/K\  @>\varphi_{\xi}>> \ G/K.
\end{CD}
\end{equation*}
Thus by Lemma \ref{weaksubm} $\varphi_v$ is an isometry with respect to $v$ and (b) follows.

(ii) ``(b) $\Rightarrow$ (a)'': Take $aK \in N$ and $\eta \in T_{aK}^\perp N$. Denote by $w \in T^\perp_a \pi^{-1}(N)$ the horizontal lift of $\eta$. Define $N'$, $\xi$, $v$ as above. Let $\varphi_{w}$ be an isometry with respect to $w$  satisfying $\varphi_w \in (G \times K)_a$. Then an isometry with respect to 
$v$ is defined by $\varphi_v := l_a^{-1} \circ \varphi_w \circ l_a$ so that $\varphi_v \in (G \times K)_e$. Then there exists $k \in K$ such that $\varphi_v(c) = kck^{-1}$. Thus defining an isometry $\varphi_\xi$ of $G/K$ by 
$\varphi_\xi := L_k$ it follows from Lemma \ref{weaksubm} that $\varphi_\xi$ is an isometry with respect to $\xi$ satisfying $\varphi_\xi \in G_{eK}$. Hence an isometry $\varphi_\eta$ with respect to $\eta$ is defined by $\varphi_\eta := l_a \circ \varphi_\xi \circ l_a^{-1}$ so that $\varphi_\eta \in G_{aK}$. This shows (b).

Using the fact
$g P(G, G \times K)_{\hat{0}}g^{-1} = P(G, G \times K)_{g * \hat{0}}$
for $g \in P(G, G \times \{e\})$
the equivalence of (b) and (c) of (ii) follows by the similar arguments to (i).
\end{proof}

\begin{cor}\label{cor} 
Let $G$, $G/K$ be as in Theorem \ref{thm2}. 
\begin{enumerate}
\item[(i)] Let $H$ be a closed subgroup of $G \times G$. Then the following are equivalent:
\begin{enumerate}
\item[(a)] an orbit $H \cdot a$ through $a \in G$ is an $H$-arid submanifold of $G$,
\item[(b)] an orbit $P(G,H)* u$ through $u \in \Phi(a)$ is a $P(G,H)$-arid PF submanifold of $V_\mathfrak{g}$.
\end{enumerate}
\item[(ii)] Let $K'$ be a closed subgroup of $G$. Then the following are equivalent:
\begin{enumerate}
\item[(a)] an orbit $K' \cdot aK$ through $aK \in G/K$ is a $K'$-arid submanifold of $G/K$,

\item[(b)] an orbit $(K' \times K) \cdot a$ through $a \in G$ is a $(K' \times K)$-arid submanifold of $G$,
\item[(c)] an orbit $P(G,K' \times K) * u$ through $u \in \Phi^{-1}(a)$ is a $P(G,K' \times K)$-arid PF submanifold of $V_\mathfrak{g}$.
\end{enumerate}
\end{enumerate}
\end{cor}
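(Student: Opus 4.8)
The plan is to realize every orbit appearing in the statement as the inverse image of another such orbit under the relevant Riemannian submersion, and then to transfer the arid property across the submersion by Lemma \ref{weaksubm}, exactly as in the proof of Theorem \ref{thm2}, while keeping track of the subgroup to which each isometry belongs. For part (i), equation \eqref{iimage} gives $P(G,H) * u = \Phi^{-1}(H \cdot a)$ with $a = \Phi(u)$, so the orbit in (b) is precisely $\Phi^{-1}(N)$ for the closed submanifold $N := H \cdot a$ of $G$ occurring in (a). For part (ii) I would first note $K' a K = (K' \times K) \cdot a$ and $\pi^{-1}(K' \cdot aK) = K' a K$, so the orbit in (b) is $\pi^{-1}$ of the orbit in (a); then, since $\Phi_K = \pi \circ \Phi$, equation \eqref{iimage} gives $\Phi_K^{-1}(K' \cdot aK) = \Phi^{-1}((K'\times K)\cdot a) = P(G, K' \times K) * u$, so the orbit in (c) is $\Phi^{-1}$ of the orbit in (b).

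The device that lets the isometries stay inside the prescribed subgroups is, in each case, an isomorphism of isotropy subgroups compatible with the submersion. For $\Phi \colon V_\mathfrak{g} \to G$ I would show that the endpoint map $\partial \colon g \mapsto (g(0), g(1))$ restricts to an isomorphism $\mathcal{G}_{u'} \xrightarrow{\sim} (G \times G)_{\Phi(u')}$ for every $u' \in V_\mathfrak{g}$: writing $u' = g_0 * \hat{0}$ with $g_0 \in P(G, G \times \{e\})$ one has $\mathcal{G}_{u'} = g_0 \hat{G} g_0^{-1}$ and $\Phi(u') = g_0(0)$, and computing the endpoints of $g_0 \hat{b} g_0^{-1}$ shows that $\partial$ is a bijection onto $(G \times G)_{g_0(0)}$. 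Since $g \in P(G,H)$ if and only if $\partial(g) \in H$, this further restricts to an isomorphism $P(G,H)_{u'} \xrightarrow{\sim} H_{\Phi(u')}$, and by \eqref{equiv4} the gauge transformation $g*$ and the isometry $\partial(g)\cdot$ of $G$ form a square commuting with $\Phi$. For $\pi \colon G \to G/K$ the corresponding statement is that the projection $(k',k) \mapsto k'$ restricts to an isomorphism $(K' \times K)_a \xrightarrow{\sim} K'_{aK}$, and that $(k',k)\cdot$ on $G$ and $k'\cdot$ on $G/K$ commute with $\pi$.

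Granting these isomorphisms, each equivalence follows from Lemma \ref{weaksubm} verbatim. For (a) $\Rightarrow$ (b) of part (i): given $u' \in \Phi^{-1}(N)$ and a nonzero $X \in T^\perp_{u'}\Phi^{-1}(N)$, set $\eta := d\Phi(X)$, take the arid isometry $\varphi_\eta = h\cdot$ with $h \in H_{\Phi(u')}$, lift it through $\partial^{-1}$ to $g \in P(G,H)_{u'}$, and put $\varphi_X := g*$; the commuting square and Lemma \ref{weaksubm} make $\varphi_X$ an isometry with respect to $X$ lying in $P(G,H)_{u'}$. The converse pushes an arid isometry $g* \in P(G,H)_{u'}$ down to $\partial(g)\cdot \in H_{\Phi(u')}$ by the same lemma. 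Parts (ii)(a) $\Leftrightarrow$ (b) and (ii)(b) $\Leftrightarrow$ (c) are identical, run respectively through $\pi$ with the isomorphism $(K'\times K)_a \cong K'_{aK}$ and through $\Phi$ with the isomorphism $P(G, K'\times K)_u \cong (K' \times K)_a$ (the case $H = K' \times K$ of part (i)).

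I expect the only substantive point to be the verification that $\partial$ is a \emph{bijection} $P(G,H)_{u'} \to H_{\Phi(u')}$, and not merely an injective homomorphism into $H_{\Phi(u')}$. Surjectivity is exactly what guarantees that an isometry coming from $H$ downstairs lifts to a gauge transformation genuinely lying in $P(G,H)$ — and not only in $\mathcal{G}$ — so that the constructed $\varphi_X$ belongs to the prescribed group; this is what upgrades the $\mathcal{G}$-arid conclusion of Theorem \ref{thm2} to the $P(G,H)$-arid statement required here. The bijectivity comes down to the conjugation formula $\mathcal{G}_{u'} = g_0 \hat{G} g_0^{-1}$ together with the endpoint computation above, which is the same mechanism as the identity $g\, P(G, G\times K)_{\hat{0}}\, g^{-1} = P(G, G \times K)_{g*\hat{0}}$ used at the end of the proof of Theorem \ref{thm2}; once it is in hand, every remaining step is a transcription of the arguments already given there.
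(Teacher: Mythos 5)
Your proposal is correct and follows essentially the same route as the paper: the paper's own proof reduces to the base point $a=e$, $u=\hat{0}$ via the conjugation $H'=(a,e)^{-1}H(a,e)$ and then invokes the arguments of Theorem \ref{thm2} (orbit-as-preimage via \eqref{iimage}, equivariance \eqref{equiv4}/\eqref{commute1}, transfer by Lemma \ref{weaksubm}, and the correspondence of isotropy elements, e.g.\ $gP(G,G\times K)_{\hat{0}}g^{-1}=P(G,G\times K)_{g*\hat{0}}$). Your only organizational difference is that you prove the isotropy isomorphism $P(G,H)_{u'}\cong H_{\Phi(u')}$ (and $(K'\times K)_a\cong K'_{aK}$) directly at arbitrary points instead of reducing to the identity first, which is the same mechanism phrased pointwise; your identification of the bijectivity of the endpoint map on isotropy groups as the crux matches the paper exactly.
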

\begin{proof}
(i) Set $H' := (a,e)^{-1}H (a,e)$. Then we have $H \cdot a = l_a(H' \cdot e)$.
Take $g \in P(G, G \times \{e\})$ so that $g* \hat{0} =u$. From  \eqref{iimage} and \eqref{commute1} (i) it follows that 
$P(G,H) * u = g*(P(G, H') *\hat{0})$.
From this fact we can assume without loss of generality that $a = e$.
Moreover by homogeneity it suffices to consider normal vectors only at $e \in G$ or $\hat{0} \in V_\mathfrak{g}$. By similar arguments as in Theorem \ref{thm2} (i) our claim follows. (ii) Similarly we can reduce the case $a =e$ and the assertion follows by similar arguments as in Theorem \ref{thm2} (ii).
\end{proof}

\begin{example}\label{example1}
Let $(U, L)$ be a compact Riemannian symmetric pair where $L$ connected. Denote by $\mathfrak{u} = \mathfrak{l} + \mathfrak{p}$ the canonical decomposition, by $\operatorname{Ad}: L \rightarrow SO(\mathfrak{p})$ the isotropy representation and by $S$ the standard sphere in $\mathfrak{p}$.

Let us first show that there are  examples of $\operatorname{Ad}(L)$-orbits which are $\operatorname{Ad}(L)$-arid submanifolds in $S$. By Taketomi's result (\cite[Proposition 4.4]{Tak18}) an orbit $N := \operatorname{Ad}(L) w$ through $w \in S$ is an $\operatorname{Ad}(L)$-arid submanifold of $S$ if and only if $N$ is an isolated orbit of the $\operatorname{Ad}(L)$-action on $S$. One can find such isolated orbits by considering the fundamental Weyl Chamber. Fix a maximal abelian subspace $\mathfrak{a}$ in $\mathfrak{p}$ and denote by $F$ the fundamental system of the restricted root system with respect to $\mathfrak{a}$. The fundamental Weyl chamber is defined by 
\begin{equation*}
C := \{w \in \mathfrak{a} \mid \forall \alpha \in F , \ \alpha(w)>0 \}
\end{equation*}
with closure 
\begin{equation*}
\bar{C} := \{w \in \mathfrak{a} \mid \forall \alpha \in F , \ \alpha(w) \geq 0 \}.
\end{equation*}
It is known (\cite[Lemma 1.2]{HKT00}) that $\bar{C}$ is decomposed by 
\begin{equation*}
\bar{C} = \coprod_{\text{$\Delta$: subset of $F$}} C^\Delta
\qquad \text{: disjoint union},
\end{equation*}
\begin{equation*}
C^\Delta := \{w \in \mathfrak{a} \mid \forall \alpha \in \Delta, \ \alpha(w) >0 \ \ \text{and}\ \ \forall \beta \in F \backslash \Delta , \ \beta(w) =0\}.
\end{equation*}
If a subset $\Delta$ consists of only one element then $\dim C^\Delta = 1$ and thus the intersection $S \cap C^\Delta$ consists of only one point, which implies that in this case the orbit $\operatorname{Ad}(L)w$ through $w \in C^\Delta$ is isolated. In this way we can obtain examples of $\operatorname{Ad}(L)$-arid submanifolds in the standard sphere $S$. Notice that from the classification result of austere $\operatorname{Ad}(L)$-orbits (\cite[Theorem 5.1]{IST09}), in particular we can choose $\operatorname{Ad}(L)$-arid orbits which are \emph{not} austere.

Applying Corollary \ref{cor} (ii) to such examples we obtain the orbit \begin{equation*}
P(SO(\mathfrak{p}), \operatorname{Ad}(L) \times SO(\mathfrak{p})_w) * \hat{0}\end{equation*}
which is an $P(SO(\mathfrak{p}), \operatorname{Ad}(L) \times SO(\mathfrak{p})_w)$-arid PF submanifolds in the Hilbert space $V_{\mathfrak{o}(\mathfrak{p})}$. Moreover by  
Theorem \ref{sphere2} such an arid PF submanifold is \emph{not} an austere (therefore \emph{not} a weakly reflective) PF submanifold in $V_\mathfrak{o(p)}$.
\end{example}

Compared to Corollary \ref{cor} (i) the following proposition covers only $H$-orbits through $e \in G$. However the isometry $\varphi_\xi$ with respect to each normal vector $\xi$ at $e \in G$ need \emph{not} belong to the isotropy subgroup $H_e$ at $e \in G$.

\begin{prop}\label{type2.2}
Let $G$ be a connected compact Lie group with a bi-invariant Riemannian metric and $H$ be a closed subgroup of $G \times G$. Suppose that the orbit $H \cdot e$ through $e \in G$ is an arid submanifold of $G$ such that for each $\xi \in (T^\perp_e (H \cdot e)) \backslash \{0\}$ an isometry $\varphi_\xi$ with respect to $\xi$ is an automorphism of $G$. Then the orbit  $P(G,H) * \hat{0}$ through $\hat{0} \in V_\mathfrak{g}$ is an  arid PF submanifold of $V_\mathfrak{g}$. 
\end{prop}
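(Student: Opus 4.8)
The plan is to reduce the problem to the single point $\hat{0}$ by homogeneity and then to lift the given automorphism $\varphi_\xi$ of $G$ to an isometry of $V_\mathfrak{g}$, exactly along the lines of the proof of Theorem \ref{thm4}. First I would record that, writing $N := H \cdot e$, the orbit in question is $P(G,H) * \hat{0} = \Phi^{-1}(N)$ by \eqref{iimage} together with $\Phi(\hat{0}) = e$; since $H$ is closed in the compact group $G \times G$, the orbit $N$ is a closed submanifold of $G$. Because the $P(G,H)$-action is isometric and transitive on $M := P(G,H) * \hat{0}$, it suffices to produce, for every nonzero $\hat\xi \in T^\perp_{\hat{0}} M$, an isometry of $V_\mathfrak{g}$ fixing $\hat{0}$, preserving $M$, and moving $\hat\xi$: conjugating such an isometry by the gauge transformation carrying $\hat{0}$ to an arbitrary $u \in M$ then yields an isometry with respect to the corresponding normal vector at $u$. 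Since $\Phi$ is a Riemannian submersion and $M = \Phi^{-1}(N)$, the normal space $T^\perp_{\hat{0}} M$ is the horizontal lift of $T^\perp_e N$; that is, every nonzero normal vector at $\hat{0}$ is a constant path $\hat\xi$ with $\xi \in T^\perp_e N \setminus \{0\}$ (these are horizontal, being orthogonal to the fiber directions \eqref{fiber2}), and $(d\Phi)_{\hat{0}}(\hat\xi) = \int_0^1 \xi\, dt = \xi$.

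Next I would construct the lift. Fix $\xi \in T^\perp_e N \setminus \{0\}$ and let $\varphi_\xi$ be the isometry of $G$ provided by the hypothesis, which is an automorphism of $G$ satisfying $\varphi_\xi(N) = N$ and $d\varphi_\xi(\xi) \neq \xi$. Following \eqref{reflection2}, define a linear transformation $\varphi_{\hat\xi}$ of $V_\mathfrak{g}$ by $\varphi_{\hat\xi}(u) := d\varphi_\xi \circ u$, that is, by applying $d\varphi_\xi$ pointwise to each path. Since $\varphi_\xi$ is an isometry of $G$ fixing $e$, its differential $d\varphi_\xi$ is orthogonal on $\mathfrak{g}$, so $\varphi_{\hat\xi}$ is an orthogonal transformation of $V_\mathfrak{g}$ fixing $\hat{0}$. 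The key point is the commutativity of $\Phi$ with this lift. Using that $\varphi_\xi$ is an automorphism, one checks from the gauge formula \eqref{gaugetransf} that $\varphi_{\hat\xi}(g * u) = (\varphi_\xi \circ g) * \varphi_{\hat\xi}(u)$ for all $g \in \mathcal{G}$ and $u \in V_\mathfrak{g}$, where $\varphi_\xi \circ g$ denotes the path $t \mapsto \varphi_\xi(g(t))$; in particular $\varphi_{\hat\xi}(g * \hat{0}) = (\varphi_\xi \circ g) * \hat{0}$. Combined with the equivariance \eqref{equiv4} and the fact that $\varphi_\xi$ is a homomorphism, this gives $\Phi \circ \varphi_{\hat\xi} = \varphi_\xi \circ \Phi$, so that the square
\begin{equation*}
\begin{CD}
V_\mathfrak{g} @>\varphi_{\hat\xi}>> V_\mathfrak{g}
\\
@V\Phi VV @V\Phi VV
\\
\ G \ @>\varphi_\xi>> \ G
\end{CD}
\end{equation*}
commutes.

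Finally I would invoke Lemma \ref{weaksubm} with $\mathcal{M} = V_\mathfrak{g}$, $\mathcal{B} = G$, $\phi = \Phi$, $p = \hat{0}$ and $X = \hat\xi$. Condition (ii) of that lemma holds, since $\varphi_\xi(N) = N$ and $d\varphi_\xi\bigl((d\Phi)_{\hat{0}}(\hat\xi)\bigr) = d\varphi_\xi(\xi) \neq \xi = (d\Phi)_{\hat{0}}(\hat\xi)$; hence condition (i) holds, i.e.\ $\varphi_{\hat\xi}(M) = M$ and $d\varphi_{\hat\xi}(\hat\xi) \neq \hat\xi$. Thus $\varphi_{\hat\xi}$ is an isometry of $V_\mathfrak{g}$ with respect to $\hat\xi$, and by the homogeneity reduction of the first paragraph the orbit $M$ is an arid PF submanifold. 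The only real work is the verification of the commutativity $\Phi \circ \varphi_{\hat\xi} = \varphi_\xi \circ \Phi$, and this is exactly where the hypothesis that $\varphi_\xi$ is an automorphism (rather than merely an isometry fixing $e$) is indispensable: it is what lets $d\varphi_\xi$ intertwine the gauge action \eqref{gaugetransf} with the composition $g \mapsto \varphi_\xi \circ g$, and hence descend to $\varphi_\xi$ under $\Phi$.
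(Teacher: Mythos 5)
Your proof is correct and takes essentially the same approach as the paper: the paper's own proof just defines $\varphi_{\hat{\xi}}$ by \eqref{reflection2} and appeals to homogeneity of the orbit, the commutativity $\Phi \circ \varphi_{\hat{\xi}} = \varphi_{\xi} \circ \Phi$ and the appeal to Lemma \ref{weaksubm} having already been carried out in the proof of Theorem \ref{thm4} (ii). You have simply made explicit the details the paper leaves implicit, namely the identification $P(G,H)*\hat{0} = \Phi^{-1}(H \cdot e)$ via \eqref{iimage}, the intertwining $\varphi_{\hat{\xi}}(g*u) = (\varphi_{\xi} \circ g)*\varphi_{\hat{\xi}}(u)$ coming from the automorphism hypothesis, and the homogeneity reduction to normal vectors at $\hat{0}$.
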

\begin{proof}
Let $\varphi_\xi$ be an isometry with respect to $\xi \in (T^\perp_e (H \cdot e)) \backslash \{0\}$ which is an automorphism of $G$. 
Then an isometry $\varphi_{\hat{\xi}}$ with respect to $\hat{\xi} \in (T^\perp_{\hat{0}} P(G,H) * \hat{0}) \backslash \{0\}$ is defined similarly to  (\ref{reflection2}). By homogeneity of $P(G, H) * \hat{0}$ our claim follows.
\end{proof}

Now we see an example of an arid submanifold $H \cdot e$ satisfying the condition in Proposition \ref{type2.2}. Although the following $H \cdot e$ can be shown to be arid by applying Theorem \ref{thm4} (i) to Taketomi's example \cite[Proposition 3.1]{Tak18}, here we give a direct proof in order to see an isometry with respect to each normal vector explicitly.
\begin{example}\label{example2}
Set $G := SO(9)$. Denote by $E$ the $3 \times 3$ unit matrix. Define $Q \in G$ by 
\begin{equation*}
Q := 
\frac{1}{\sqrt{6}}
\left[
\begin{array}{ccc}
\sqrt{2} E & \sqrt{3} E & E
\\
\sqrt{2} E & -\sqrt{3} E & E
\\
\sqrt{2} E & 0 & -2 E
\end{array}
\right].
\end{equation*}
Set $K := Q(\{1\} \times SO(8))Q^{-1}$, $K' := SO(3) \times SO(3) \times SO(3)$ and $H := K' \times K$. Then the tangent space of the orbit $H \cdot e$ is given by 
\begin{equation*}
T_{e}(H \cdot e)
=
\mathfrak{k}' + \mathfrak{k}
=
\mathfrak{k}' +Q(0 \oplus \mathfrak{o}(8)) Q^{-1}
=
Q( Q^{-1}\mathfrak{k}'Q + ( 0 \oplus \mathfrak{o}(8))) Q^{-1}.
\end{equation*}
Then it follows from the direct computations that each $X \in Q^{-1} T^\perp_e (H \cdot e) Q$ is written by
\begin{equation*}
X = \left[
\begin{array}{ccc}
0& S&T
\\
-S& 0& 0
\\
-T& 0& 0
\end{array}
\right],
\end{equation*}
\begin{equation*}
S := 
\left[
\begin{array}{ccc}
s &0 & 0 
\\ 
0& 0& 0
\\
0& 0&0
\end{array}
\right]
, \quad
T := 
\left[
\begin{array}{ccc}
t &0& 0 
\\ 
0&0& 0
\\
0& 0& 0
\end{array}
\right]
, \quad
s, t \in \mathbb{R}.
\end{equation*}
The calculation of $Q X Q^{-1}$ shows that each $Y \in T^\perp_e (H \cdot e)$ is written by
\begin{equation*}
Y =\left[
\begin{array}{ccc}
0 & U & V
\\
-U & 0 & W
\\
-V&-W&0
\end{array}
\right],
\end{equation*}
\begin{align*}
U :=
\left[
\begin{array}{ccc}
-2x&0&0
\\
0&0&0
\\
0&0&0
\end{array}
\right]
, \quad 
V :=
\left[
\begin{array}{ccc}
-x-y&0&0
\\
0&0&0
\\
0&0&0
\end{array}
\right]
,\quad 
W :=
\left[
\begin{array}{ccc}
x-y&0&0
\\
0&0&0
\\
0&0&0
\end{array}
\right]
\end{align*}
for $x,y \in \mathbb{R}$. For each $(i,j)  \in \{(1,2), (1,3), (2,3)\}$ we define $P_{i,j} \in O(9)$ by
\begin{equation*}
P_{1,2}
:=
\left[
\begin{array}{ccc}
0 & E & 0
\\
E&0 &0
\\
0&0& E
\end{array}
\right]
, \quad
P_{1,3}
:=
\left[
\begin{array}{ccc}
0 & 0 & E
\\
0&E &0
\\
E&0& 0
\end{array}
\right]
, \quad
P_{2,3}
:=
\left[
\begin{array}{ccc}
E & 0 & 0
\\
0&0 &E
\\
0&E& 0
\end{array}
\right]
\end{equation*} 
and define an automorphism $\varphi_{ij}$ of $G$ by 
\begin{equation*}
\varphi_{ij} (A) := P_{ij} A P_{ij}^{-1}
, \quad A \in G.
\end{equation*}
Then for each $\xi \in T^\perp_e (H \cdot e)$ there exists $(i,j)$ such that $\varphi_{ij}$ is an isometry  with respect to $\xi$. Thus $H \cdot e$ is an arid submanifold of $G$. Since $\varphi_{ij}$ is an automorphism of $G$ it follows from Proposition \ref{type2.2} that the orbit $P(G,H)* \hat{0}$ is an arid PF submanifold of $V_\mathfrak{g}$. 
Notice that we can not apply Corollary \ref{cor} (i) to this example since $\varphi_{ij}$ is \emph{not} an inner automorphism of $G$ and thus not belong to the isotropy subgroup $H_e$ at $e \in G$.
\end{example}

\section{Open problems}\label{Kac}
Recall that an isometric action of a compact Lie group on a Riemannian manifold $M$ is called \emph{polar} if there exists a closed connected submanifold $S$ of $M$ which meets every orbit orthogonally. If $S$ is flat in the induced metric then such an action is called \emph{hyperpolar} (\cite{HPTT95}). Similarly an isometric PF action of a Hilbert Lie group on a Hilbert space $V$ is called \emph{hyperpolar} (\cite{HPTT95}) if there exists a closed affine subspace $S$ of $V$ which meets every orbit orthogonally.  It was shown (\cite{Ter89}) that the  $P(G,H)$-action is hyperpolar if the $H$-action on $G$ is hyperpolar. Hyperpolar actions on irreducible Riemannian symmetric spaces of compact type were classified by Kollross \cite{Kol02}.

In this paper and the previous paper \cite{M19} we have seen examples of
minimal PF submanifolds with certain symmetries, such as weakly reflective PF submanifolds, austere PF submanifolds and arid PF submanifolds. At present all such examples are obtained as orbits of  hyperpolar $P(G,H)$-actions. Then it is interesting to study the following problem:
\begin{problem} 
Determine minimal orbits in a hyperpolar $P(G,H)$-action and classify the symmetric properties they have.
\end{problem}

It is noted (\cite{HPTT95}) that a  hyperpolar $P(G,H)$-action can be thought of the isotropy representation of an affine Kac-Moody symmetric space, which is an infinite dimensional version of a symmetric space proposed by Terng \cite{Ter95} and established by Heintze \cite{Hei06}, Popescu \cite{Pop05} and Freyn \cite{Fre11}. In the finite dimensional case minimal orbits of the isotropy representation of symmetric spaces have been systematically studied  (e.g.\ \cite{HTST00}, \cite{IST09}). It may be  interesting to ask whether there are similar properties for minimal orbits in the isotropy representations of Kac-Moody symmetric spaces.

In order to study the above problem it is noted (\cite{KT93}, \cite{HLO06}) that a $P(G,H)$-orbit is minimal if and only if the $H$-orbit is minimal. This shows that the determination of minimal $P(G,H)$-orbits can be reduced to a finite dimensional problem. However the classification of symmetric properties seems not easy. As we have seen in Section \ref{austerevia} the austere property via the parallel transport map is not clear except for the spherical case. Moreover even if we classify all austere orbits in $H$- or $P(G,H)$-actions it seems very difficult in general to classify all weakly reflective orbits; it is very hard to assert one austere orbit is \emph{not} weakly reflective.

In connection with such a classification problem the author is now giving attention to the structure of weakly reflective submanifolds. Let $M$ be a weakly reflective submanifold of a  Riemannian manifold $\bar{M}$. 
Denote by $I(\bar{M})$ the group of isometries of  $\bar{M}$ and by $I_0(\bar{M})$ its  identity component. 
There are at least two kinds of weakly reflective submanifolds:
\begin{enumerate}
\item[(a)] for each $p \in M$ and each $\xi \in T^\perp_p M$ there exists $\nu_\xi \in I_0(\bar{M})$ (: \emph{identity component}) such that $\nu_\xi(p)= p$, $d \nu_\xi(\xi) = - \xi$ and $\nu_\xi(M) = M$. 
\item[(b)] for each $p \in M$ there exists an \emph{involutive} isometry $\nu_p \in I(\bar{M})$ which is \emph{independent} of the choice of $\xi \in T^\perp_p M$ such that $\nu_p(p)= p$, $d \nu_p(\xi) = - \xi$ and $\nu_p(M) =M$. 
\end{enumerate}
For example, consider an isometric action of cohomogeneoity one on $\bar{M}$. It is known (\cite{Pod97}, \cite{IST09}) that in this case any singular orbit $M$ is a weakly reflective submanifold of $\bar{M}$. More precisely if $M$ has one codimension in $\bar{M}$ then $M$ satisfies the condition (b) 
and otherwise $M$ satisfies the condition (a). Note that there exist examples of weakly reflective submanifolds which satisfy both conditions (a) and (b): 
a weakly reflective submanifold $M$ reviewed in Example \ref{example3} (the case $m = 2$) satisfies (b) for all $n \in \mathbb{Z}_{\geq 2}$ and in particular if $n$ is even then it also satisfies (a). Although examples by Ohno \cite{Ohno16} and by Kimura-Mashimo \cite{KM19} satisfy the condition (a), Enoyoshi's example (\cite{Eno18}) does not satisfy (a) but (b). It should be also noted (\cite{M19}) that under suitable assumptions if $M$ satisfies the condition (b) then its inverse image under the parallel transport map is a weakly reflective PF submanifold satisfying the condition (b). It can be a problem to consider whether weakly reflective submanifolds which does not satisfy both conditions (a) and (b) exist or not. Also for arid submanifolds similar types may work and might be useful to study their structure.

\section*{Acknoledgements}
The author would like to thank  Professor Yoshihiro Ohnita for many  discussions and invaluable  suggestions. The author is also grateful to Professors Takashi Sakai and Hiroshi Tamaru for introducing the concept of arid submanifolds to me and suggesting me to study them in the Hilbert space. Special thanks are also due to Professors Ernst Heintze and Jost-Hinrich Eschenburg for their interests in my work and having many discussions with me, and to Professor Peter Quast for his kind hospitality during the author's visit to the University of Augsburg.



\begin{thebibliography}{9}
\bibitem{Bry91} R. L. Bryant, \emph{Some remarks on the geometry of austere manifolds}, Bol. Soc. Brasil. Mat. (N.S.) \textbf{21} (1991), no. 2, 133-157.


\bibitem{DF01} M. Dajczer, L. A. Florit, \emph{A class of austere submanifolds}, Illinois J. Math. \textbf{45} (2001), no. 3, 735-755. 

\bibitem{Eno18} K. Enoyoshi, \emph{Principal curvatures of homogeneous hypersurfaces in a Grassmann manifold $\widetilde {\operatorname{Gr}}_3 (\operatorname{Im}\mathbb{O})$ by the $G_2$-action}, to appear in Tokyo J. Math.

\bibitem{Fre11} W. Freyn, \emph{Affine Kac-Moody symmetric spaces}, arXiv:1109.2837 (2011).

\bibitem{HL82} R. Harvey and H. B. Lawson, Jr., \emph{Calibrated geometries}, Acta Math., \textbf{148} (1982), 47-157.

\bibitem{Hei06} E. Heintze, \emph{Toward symmetric spaces of affine Kac-Moody type}, Int. J. Geom. Methods Mod. Phys. \textbf{3} (2006), no. 5-6, 881-898.

\bibitem{HLO06} E. Heintze, X. Liu, C. Olmos,  \emph{Isoparametric submanifolds and a Chevalley-type restriction theorem}, Integrable systems, geometry, and topology, 151-190, AMS/IP Stud. Adv. Math., \textbf{36}, Amer. Math. Soc., Providence, RI, 2006.

\bibitem{HPTT95} E. Heintze, R. Palais, C.-L. Terng, G. Thorbergsson, \emph{Hyperpolar actions on symmetric spaces}, Geometry, topology, \& physics, 214–245, Conf. Proc. Lecture Notes Geom. Topology, IV, Int. Press, Cambridge, MA, 1995.

\bibitem{Hel01} S. Helgason, \emph{Differential Geometry, Lie groups, and Symmetric Spaces}, Corrected reprint of the 1978 original. Graduate Studies in Mathematics, \textbf{34}. American Mathematical Society, Providence, RI, 2001.

\bibitem{HKT00} D. Hirohashi, T.  Kanno, H. Tasaki, \emph{Area-minimizing of the cone over symmetric R-spaces}, Tsukuba J. Math. \textbf{24} (2000), no. 1, 171-188. 

\bibitem{HTST00} D. Hirohashi, H. Tasaki, H. Song, R. Takagi, \emph{Minimal orbits of the isotropy groups of symmetric spaces of compact type}, Differential Geom. Appl. \textbf{13} (2000), no. 2, 167-177.

\bibitem{IST09}  O. Ikawa,  T. Sakai, H. Tasaki, \emph{Weakly reflective submanifolds and austere submanifolds}. J. Math. Soc. Japan \textbf{61} (2009), no. 2, 437-481.


\bibitem{II10} M. Ionel, T. Ivey, \emph{Austere submanifolds of dimension four: examples and maximal types}, Illinois J. Math. \textbf{54} (2010), no. 2, 713-746.


\bibitem{KM19} T. Kimura, K. Mashimo, \emph{Classification of Cartan embeddings which are austere submanifolds}, a preprint.

\bibitem{KT93} C. King, C.-L. Terng, \emph{Minimal submanifolds in path space}, Global analysis in modern mathematics, 253-281, Publish or Perish, Houston, TX, 1993.

\bibitem{Koi02} N. Koike, \emph{On proper Fredholm submanifolds in a Hilbert space arising from submanifolds in a symmetric space}, Japan. J. Math. (N.S.) \textbf{28} (2002), no. 1, 61-80.

\bibitem{Kol02} A. Kollross, \emph{A classification of hyperpolar and cohomogeneity one actions}, Trans. Amer. Math. Soc. \textbf{354} (2002), no. 2, 571-612.


\bibitem{Loo69II} O. Loos, \emph{Symmetric Spaces. II: Compact Spaces and Classification}. W. A. Benjamin, Inc., New York-Amsterdam 1969.

\bibitem{Leu73} Dominic S. P. Leung,  \emph{The reflection principle for minimal submanifolds of Riemannian symmetric spaces},  J. Differential Geom. \textbf{8} (1973), 153-160. 

\bibitem{Leu74} Dominic S. P. Leung, \emph{On the classification of reflective submanifolds of Riemannian symmetric spaces},  Indiana Univ. Math. J. \textbf{24} (1974/75), 327-339.

\bibitem{GR07} I. S. Gradshteyn,  I. M. Ryzhik, \emph{Table of Integrals, Series, and Products}, Translated from the Russian. Translation edited and with a preface by Alan Jeffrey and Daniel Zwillinger. Seventh edition. Elsevier/Academic Press, Amsterdam, 2007. 


\bibitem{M19} M. Morimoto, \emph{On weakly reflective PF submanifolds in Hilbert spaces}, to appear in Tokyo J. Math.


\bibitem{Ohno16} S. Ohno,  \emph{A sufficient condition for orbits of Hermann actions to be weakly reflective}, Tokyo J. Math. \textbf{39} (2016), no. 2, 537-564.

\bibitem{Pal63} R. S. Palais, \emph{Morse theory on Hilbert manifolds}, Topology 2 (1963), 299-340.

\bibitem{PT88} R. S. Palais, C.-L. Terng, \emph{Critical Point Theory and Submanifold Geometry}, Lecture Notes in Math., vol 1353, Springer-Verlag, Berlin and New York, 1988.


\bibitem{Pop05} B. Popescu, \emph{Infinite dimensional symmetric spaces}, Thesis, University of Augsburg (2005).

\bibitem{Pod97} F. Podest\`a, \emph{Some remarks on austere submanifolds}, Boll. Un. Mat. Ital. B (7) \textbf{11} (1997), no. 2, suppl., 157-160.

\bibitem{Sma64} S. Smale, \emph{Morse theory and a non-linear generalization of the Dirichlet problem}, Ann. of Math. (2)\textbf{80} (1964), 382-396. 

\bibitem{Tak18} Y. Taketomi, \emph{On a Riemannian submanifold whose slice representation has no nonzero fixed points}, Hiroshima Math. J. \textbf{48} (2018), no. 1, 1-20.


\bibitem{Ter89} C.-L.  Terng,  \emph{Proper Fredholm submanifolds of Hilbert space.} J. Differential Geom. \textbf{29} (1989), no. 1, 9-47.

\bibitem{Ter95} C.-L. Terng, \emph{Polar actions on Hilbert space}. J. Geom. Anal. \textbf{5} (1995), no. 1, 129-150. 

\bibitem{TT95} C.-L. Terng, G. Thorbergsson, \emph{Submanifold geometry in symmetric spaces.} J. Differential Geom. \textbf{42} (1995), no. 3, 665-718.
\end{thebibliography}
\end{document}